\newtheorem{theorem}{Theorem}[section]
\theoremstyle{definition}
\newtheorem{definition}[theorem]{Definition}
\newtheorem{example}[theorem]{Example}
\newtheorem{corollary}[theorem]{Corollary}
\newtheorem{lem}[theorem]{Lemma}
\theoremstyle{remark}
\numberwithin{equation}{section}
\begin{document}

\title{ Krasner $(m,n)$-hyperring of fractions  }

\author{M. Anbarloei}
\address{Department of Mathematics, Faculty of Sciences,
Imam Khomeini International University, Qazvin, Iran.
}

\email{m.anbarloei@sci.ikiu.ac.ir }


\subjclass[2010]{ 20N20, 19Y99, 20N15}


\keywords{ $n$-ary prime hyperideal, $n$-ary multiplicative subset, Krasner $(m,n)$-hyperring.}
\begin{abstract}
 The formation of rings of fractions and the associated process of localization are  the most important technical tools in commutative algebra. Krasner $(m,n)$-hyperrings are  a generalization of  $(m,n)$-rings. Let $R$ be a commutative Krasner $(m,n)$-hyperring. The aim of this research work is to  introduce  the concept of  hyperring
of fractions generated by $R$ and then investigate the basic properties
such hyperrings. 
\end{abstract}
\maketitle
\section{Introduction}
The notion of Krasner hyperrings was introduced by Krasner for the
first time in \cite{miim}. Also, we can see some properties on Krasner hyperrings in \cite{dav3} and \cite{dav4}. In \cite{dav}, Davvaz and Vougiouklis defined the notion of $n$-ary hypergroups which is a generalization of hypergroups in the sense of Marty.  The concept of $(m,n)$-ary hyperrings was introduced in \cite{dav2}. Davvaz and et. al. introduced Krasner $(m, n)$- hyperrings as a generalization of $(m, n)$-rings and studied  some results in this context in \cite{d1}. We can see some important hyperideals of the Krasner $(m,n)$-hyperrings in \cite{amer} and \cite{dav5}. Also, Ostadhadi and Davvaz studied the isomorphism theorems of ring theory and Krasner hyperring theory which are derived in the context of Krasner (m, n)-hyperrings in \cite{dav1}.
Ameri and Norouzi introduced in \cite{amer} the notions of $n$-ary prime and $n$-ary primary hyperideals in a Krasner $(m,n)$-hyperring and proved some results in this respect. The notion of $n$-ary 2-absorbing hyperideals  in a Krasner $(m,n)$-hyperring as a generalization of the $n$-ary prime hyperideals  was introduced  in \cite{mah}. To unify the concepts of the prime and
primary hyperideals under one frame, the notion of $\delta$-primary hyperideals was defined in Krasner $(m,n)$-hyperrings \cite{mah2}. The formation of rings of fractions and the associated process of localization are  the most important technical tools in commutative algebra. They correspond in the algebra-geometric picture to concentratining attention on an open set or near a point, and the importance of these notions should be self-evident. Procesi and Rota in \cite{88888} have studied ring of fractions in Krasner hyperrings.

In this paper, we aim to define the notion of a hyperring of fractions of Krasner $(m,n)$-hyperrings  and provide several properties of them.
The paper is organized as follows. In section 2, we have given some basic definitions and results of n-ary hyperstructures which we need to develop our paper.
In section 3, we have constructed the Krasner $(m,n)$-hyperring of fractions. In section 4, we have studied the   hyperideals of 
Krasner $(m,n)$-hyperring of fractions. In section 5, we have investigated construction of  qutient Krasner $(m,n)$-hyperring of fractions. 
\section{Preliminaries}
In this section we recall some definitions and results about $n$-ary hyperstructures which we need to develop our paper.\\
A mapping $f : H^n \longrightarrow P^*(H)$
is called an $n$-ary hyperoperation, where $P^*(H)$ is the
set of all the non-empty subsets of $H$. An algebraic system $(H, f)$, where $f$ is an $n$-ary hyperoperation defined on $H$, is called an $n$-ary hypergroupoid. \\
We shall use the following abbreviated notation:\\
The sequence $x_i, x_{i+1},..., x_j$ 
will be denoted by $x^j_i$. For $j < i$, $x^j_i$ is the empty symbol. In this convention

$f(x_1,..., x_i, y_{i+1},..., y_j, z_{j+1},..., z_n)$\\
will be written as $f(x^i_1, y^j_{i+1}, z^n_{j+1})$. In the case when $y_{i+1} =... = y_j = y$ the last expression will be written in the form $f(x_1^i, y^{(j-i)}, z^n_{j+1})$. 
For non-empty subsets $A_1,..., A_n$ of $H$ we define

$f(A^n_1) = f(A_1,..., A_n) = \bigcup \{f(x^n_1) \ \vert \ x_i \in A_i, i = 1,..., n \}$.\\
An $n$-ary hyperoperation $f$ is called associative if

$f(x^{i-1}_1, f(x_ i ^{n+i-1}), x^{2n-1}_{n+i}) = f(x^{j-1}_1, f(x_j^{n+j-1}), x_{n+j}^{2n-1}),$ \\
hold for every $1 \leq i < j \leq n$ and all $x_1, x_2,..., x_{2n-1} \in H$. An $n$-ary hypergroupoid with the
associative $n$-ary hyperoperation is called an $n$-ary semihypergroup. 

An $n$-ary hypergroupoid $(H, f)$ in which the equation $b \in f(a_1^{i-1}, x_i, a_{ i+1}^n)$ has a solution $x_i \in H$
for every $a_1^{i-1}, a_{ i+1}^n,b \in H$ and $1 \leq i \leq n$, is called an $n$-ary quasihypergroup, when $(H, f)$ is an $n$-ary
semihypergroup, $(H, f)$ is called an $n$-ary hypergroup. 

An $n$-ary hypergroupoid $(H, f)$ is commutative if for all $ \sigma \in \mathbb{S}_n$, the group of all permutations of $\{1, 2, 3,..., n\}$, and for every $a_1^n \in H$ we have $f(a_1,..., a_n) = f(a_{\sigma(1)},..., a_{\sigma(n)})$.
If an $a_1^n \in H$ we denote $a_{\sigma(1)}^{\sigma(n)}$ as the $(a_{\sigma(1)},..., a_{\sigma(n)})$. We assume throughout this paper that all Krasner $(m,n)$-hyperrings are commutative.

If $f$ is an $n$-ary hyperoperation and $t = l(n- 1) + 1$, then $t$-ary hyperoperation $f_{(l)}$ is given by

$f_{(l)}(x_1^{l(n-1)+1}) = f(f(..., f(f(x^n _1), x_{n+1}^{2n -1}),...), x_{(l-1)(n-1)+1}^{l(n-1)+1})$. 
\begin{definition}
\cite{d1} Let $(H, f)$ be an $n$-ary hypergroup and $B$ be a non-empty subset of $H$. $B$ is called
an $n$-ary subhypergroup of $(H, f)$, if $f(x^n _1) \subseteq B$ for $x^n_ 1 \in B$, and the equation $b \in f(b^{i-1}_1, x_i, b^n _{i+1})$ has a solution $x_i \in B$ for every $b^{i-1}_1, b^n _{i+1}, b \in B$ and $1 \leq i \leq n$.
An element $e \in H$ is called a scalar neutral element if $x = f(e^{(i-1)}, x, e^{(n-i)})$, for every $1 \leq i \leq n$ and
for every $x \in H$. 

An element $0$ of an $n$-ary semihypergroup $(H, g)$ is called a zero element if for every $x^n_2 \in H$ we have
$g(0, x^n _2) = g(x_2, 0, x^n_ 3) = ... = g(x^n _2, 0) = 0$.
If $0$ and $0^ \prime $are two zero elements, then $0 = g(0^ \prime , 0^{(n-1)}) = 0 ^ \prime$ and so the zero element is unique. 
\end{definition}
\begin{definition}
\cite{l1} Let $(H, f)$ be a $n$-ary hypergroup. $(H, f)$ is called a canonical $n$-ary
hypergroup if\\
(1) there exists a unique $e \in H$, such that  $f(x, e^{(n-1)}) = x$ for every $x \in H$;\\
(2) for all $x \in H$ there exists a unique $x^{-1} \in H$, such that $e \in f(x, x^{-1}, e^{(n-2)})$;\\
(3) if $x \in f(x^n _1)$, then  $x_i \in f(x, x^{-1},..., x^{-1}_{ i-1}, x^{-1}_ {i+1},..., x^{-1}_ n)$ for all $i$ .

We say that e is the scalar identity of $(H, f)$ and $x^{-1}$ is the inverse of $x$. Notice that  $e^{-1}=e$ 
\end{definition}
\begin{definition}
\cite{d1} A Krasner $(m, n)$-hyperring is an algebraic hyperstructure $(R, f, g)$ which
satisfies the following axioms:\\
(1) $(R, f$) is a canonical $m$-ary hypergroup;\\
(2) $(R, g)$ is an $n$-ary semigroup;\\
(3) the $n$-ary operation $g$ is distributive with respect to the $m$-ary hyperoperation $f$ , i.e., 
\[g(a^{i-1}_1, f(x^m _1 ), a^n _{i+1}) = f(g(a^{i-1}_1, x_1, a^n_{ i+1}),..., g(a^{i-1}_1, x_m, a^n_{ i+1}))\] for every $a^{i-1}_1 , a^n_{ i+1}, x^m_ 1 \in R$ and $1 \leq i \leq n$;\\
(4) $0$ is a zero element (absorbing element) of the $n$-ary operation $g$, i.e.,   
\[g(0, x^n _2) = g(x_2, 0, x^n _3) = ... = g(x^n_ 2, 0) = 0\]
for every $x^n_ 2 \in R$.\\We denote the Krasner (m,n)-hyperring $(R,f,g)$ simply by $R$. We say that $R$ is with scalar identity if there exists an element $1$ such that $x = g(x, 1^{( n-1)})$
for all $x \in R$. In this paper, we assume that $R$ is with scalar identity.

\end{definition}
A non-empty subset $S$ of $R$ is said to be a subhyperring of $R$ if $(S, f, g)$ is a Krasner $(m, n)$-hyperring. Let
$I$ be a non-empty subset of $R$, we say that $I$ is a hyperideal of $R$ if $(I, f)$ is an $m$-ary subhypergroup
of $(R, f)$ and $g(x^{i-1}_1, I, x_{i+1}^n) \subseteq I$, for every $x^n _1 \in R$ and $1 \leq i \leq n$.\\

\begin{definition}
\cite{amer} A proper hyperideal $I$ of a Krasner $(m, n)$-hyperring $R$ is said to be an $n$-ary prime hyperideal if for hyperideals $I_1,..., I_n$ of $R$, $g(I_1^ n) \subseteq I$ implies that $I_1 \subseteq I$ or $I_2 \subseteq I$ or ...or $I_n \subseteq I$.
\end{definition}
\begin{lem} 
A proper hyperideal   $I$ of a Krasner $(m, n)$-hyperring $R$ is an $n$-ary prime hyperideal if for all $x^n_ 1 \in R$,
$g(x^n_ 1) \in I $ implies that  $x_1 \in I \ or \ ... \ or \ x_n \in I$. (Lemma 4.5 in \cite{amer})
\end{lem}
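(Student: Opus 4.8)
The plan is to verify the defining condition of an $n$-ary prime hyperideal directly from the elementwise hypothesis by a short argument by contradiction. The key structural observation I would record first is that, since $(R,g)$ is an $n$-ary semigroup, $g$ is single-valued; hence for hyperideals $I_1,\dots,I_n$ we have $g(I_1^n)=\{\,g(x_1^n)\mid x_i\in I_i\,\}$, and in particular $g(x_1^n)\in g(I_1^n)$ whenever $x_i\in I_i$ for each $i$.

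First I would fix hyperideals $I_1,\dots,I_n$ of $R$ satisfying $g(I_1^n)\subseteq I$, and suppose for contradiction that $I_j\not\subseteq I$ for every $1\le j\le n$. Then for each $j$ I can choose an element $x_j\in I_j\setminus I$. Using the observation above, $g(x_1^n)\in g(I_1^n)\subseteq I$, so the tuple $x_1^n$ satisfies the hypothesis $g(x_1^n)\in I$. The elementwise assumption then forces $x_i\in I$ for some $i$, contradicting the choice $x_i\in I_i\setminus I$. Therefore at least one $I_j\subseteq I$, which is precisely the definition of $I$ being $n$-ary prime.

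I expect no genuine obstacle in this direction: the only point requiring care is the bookkeeping that $g(x_1^n)$ really lies in $g(I_1^n)$, and this is clean precisely because $g$ is an operation rather than a hyperoperation, so no set-membership subtleties arise.

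For completeness I note the converse implication, which would upgrade the statement to a full characterization, though it is not needed as the lemma is phrased. There one would start from $g(x_1^n)\in I$, pass to the principal hyperideals $\langle x_j\rangle$ generated by each $x_j$, show $g(\langle x_1\rangle,\dots,\langle x_n\rangle)\subseteq I$ using distributivity of $g$ over $f$ together with the absorbing property of $0$, and then invoke primeness to conclude $\langle x_i\rangle\subseteq I$, whence $x_i\in I$. The main difficulty in that direction is obtaining an explicit enough description of $\langle x\rangle$ to control the $g$-product, but this is not required for the implication asserted here.
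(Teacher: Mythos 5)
Your argument is correct: because $(R,g)$ is an $n$-ary semigroup, $g$ is single-valued, so for any choice $x_j\in I_j$ one has $g(x_1^n)\in g(I_1^n)\subseteq I$, and picking $x_j\in I_j\setminus I$ for every $j$ (possible whenever no $I_j$ is contained in $I$) contradicts the elementwise hypothesis --- exactly the standard contrapositive proof of this direction. Note that the paper itself states this lemma without proof, merely citing Lemma 4.5 of Ameri and Norouzi, so there is no in-paper argument to compare against; your proof is the expected one, and you are right that the converse (via principal hyperideals) is not needed for the implication as stated.
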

\begin{definition}
\cite{amer}  Let $R$ be a Krasner $(m, n)$-hyperring. A non-empty subset $S$ of $R$ is called $n$-ary
multiplicative, if $g(s_1^n) \in S$  for $s_1,..., s_n \in S$. 
\end{definition}
In this paper, we assume that $1 \in S$.
\begin{definition}
\cite{amer} A Krasner $(m, n)$-hyperring $R$ is said to be an $n$-ary hyperintegral domain, if $R$ is a
commutative Krasner $(m, n)$-hyperring and $g(x_1^n) = 0$ implies that $x_1 = 0$ or $x_2 = 0$ or ... or $x_n = 0$
for all $x_1^n$.
\end{definition}
\begin{definition}
 \cite{amer} Let $R$ be a Krasner $(m, n)$-hyperring. An element $x \in  R$ is said to be invertible if there exists $y \in  R$ with  $1= g(x, y, 1^{( n-2)})$. 
\end{definition}
\begin{definition} \cite{d1} 
Let $(R_1, f_1, g_1)$ and $(R_2, f_2, g_2)$ be two Krasner $(m, n)$-hyperrings. A mapping
$\phi : R_1 \longrightarrow R_2$ is called a homomorphism if for all $x^m _1 \in R_1$ and $y^n_ 1 \in R_1$ we have

$\phi(f_1(x_1,..., x_m)) = f_2(\phi(x_1),...,\phi(x_m))$

$\phi(g_1(y_1,..., y_n)) = g_2(\phi(y_1),...,\phi(y_n)). $
\end{definition}









\section{ Krasner $(m,n)$-hypering of fractions }
 Let $R$ be any Krasner $(m,n)$-hyperring and
let $S$ be an $n$-ary multiplicative subset of $R$ such that  $1 \in  S$. We shall construct the Krasner $(m,n)$-hyperring of fractions
$S^{-1}R$. We define a relation $\sim$ on $R \times  S$ by $(r,s) \sim (r^\prime,s^\prime)$ if and
only if there exists some $s \in S$ such that 

$0 \in g(s,f(g(r,s^\prime,1^{(n-2)}),g(r^\prime,s,1^{(n-2)})
,0^{(m-2)}),1^{(n-2)})$.
\begin{theorem}
The relation $\sim$ is an equivalence relation on $R \times S$.
\end{theorem}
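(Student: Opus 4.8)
The plan is to verify reflexivity, symmetry and transitivity in turn, and the decisive preliminary move is to replace the set-membership condition by an honest equation in $R$. Write $-x$ for the unique inverse of $x$ in the canonical $m$-ary hypergroup $(R,f)$, so that $0\in f(x,-x,0^{(m-2)})$ (cf. Definition 2.2), and read the inner $f$-combination as pairing $g(r,s',1^{(n-2)})$ with the additive inverse $-g(r',s,1^{(n-2)})$, so that $(r,s)\sim(r',s')$ asserts $0\in g\big(t,f(g(r,s',1^{(n-2)}),-g(r',s,1^{(n-2)}),0^{(m-2)}),1^{(n-2)}\big)$ for some $t\in S$. Since $(R,g)$ is an ordinary (single-valued) $n$-ary semigroup, only $f$ is set-valued, and I would simplify the right-hand side before anything else. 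Distributing $g$ over $f$ (axiom (3)) and using that $0$ is absorbing for $g$ turns the displayed set into $f\big(g(t,g(r,s',1^{(n-2)}),1^{(n-2)}),\,-g(t,g(r',s,1^{(n-2)}),1^{(n-2)}),\,0^{(m-2)}\big)$, where I also use $g(t,-y,1^{(n-2)})=-g(t,y,1^{(n-2)})$, itself a consequence of distributivity and uniqueness of inverses. Writing $P=g(t,r,s',1^{(n-3)})$ and $Q=g(t,r',s,1^{(n-3)})$ via associativity of $g$, the condition becomes $0\in f(P,-Q,0^{(m-2)})$, which by uniqueness of inverses in the canonical hypergroup holds if and only if $P=Q$. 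Thus $(r,s)\sim(r',s')$ is equivalent to the existence of $t\in S$ with $g(t,r,s',1^{(n-3)})=g(t,r',s,1^{(n-3)})$, the hyperring analogue of the classical $t(rs'-r's)=0$.

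With this reformulation the first two properties are immediate. For reflexivity I take $t=1\in S$, and the required equation $g(1,r,s,1^{(n-3)})=g(1,r,s,1^{(n-3)})$ holds trivially. For symmetry, the equation defining $(r',s')\sim(r,s)$ is $g(t,r',s,1^{(n-3)})=g(t,r,s',1^{(n-3)})$, which is literally the equation defining $(r,s)\sim(r',s')$ read backwards (using commutativity of $g$), so the same witness $t$ serves.

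The substance is transitivity, which is the step I expect to require the most care. Assume $g(t_1,r,s',1^{(n-3)})=g(t_1,r',s,1^{(n-3)})$ and $g(t_2,r',s'',1^{(n-3)})=g(t_2,r'',s',1^{(n-3)})$ with $t_1,t_2\in S$; I must exhibit a single $w\in S$ with $g(w,r,s'',1^{(n-3)})=g(w,r'',s,1^{(n-3)})$. Following the classical cross-multiplication, I would take $w$ to be the $n$-ary product of $t_1$, $t_2$ and $s'$ (padded with $1$'s), which lies in $S$ because $S$ is $n$-ary multiplicative and $1\in S$. The verification is then a regrouping exercise: using associativity and commutativity of $g$, pull the factor $g(t_1,r,s',1^{(n-3)})$ out of $g(w,r,s'',1^{(n-3)})$, substitute the first hypothesis to replace it by $g(t_1,r',s,1^{(n-3)})$, regroup to expose $g(t_2,r',s'',1^{(n-3)})$, substitute the second hypothesis, and collapse back to $g(w,r'',s,1^{(n-3)})$. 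The main obstacle is purely bookkeeping — keeping the arguments of the $n$-ary operation correctly aligned through repeated associative and commutative rearrangements (together with the earlier distributive reduction) — rather than any conceptual difficulty; once the equation holds for $w$, it certifies $(r,s)\sim(r'',s'')$ and the proof is complete.
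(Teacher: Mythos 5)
Your proof is correct, and its decisive step is the same as the paper's: transitivity is obtained by cross-multiplication, with the new witness being the $g$-product of the two old witnesses and the middle denominator (the paper's $g(s,s^{\prime},s_{2},1^{(n-3)})$ is exactly your $w=g(t_{1},t_{2},s^{\prime},1^{(n-3)})$). What you do differently is to isolate, up front, the equational characterization of the relation: since $(R,g)$ is single-valued and inverses in the canonical $m$-ary hypergroup $(R,f)$ are unique, $0\in f(P,-Q,0^{(m-2)})$ holds if and only if $P=Q$, whence $(r,s)\sim(r^{\prime},s^{\prime})$ if and only if $g(t,r,s^{\prime},1^{(n-3)})=g(t,r^{\prime},s,1^{(n-3)})$ for some $t\in S$. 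The paper uses this fact only implicitly and in one direction (its intermediate step $g(s,r_{2},s_{1},1^{(n-3)})\in f(g(s,r_{1},s_{2},1^{(n-3)}),0^{(m-1)})$ is the same observation, since $f(x,0^{(m-1)})=\{x\}$), and otherwise drags the set-valued expressions through a long chain of inclusions; your reformulation reduces the whole verification to an identity among single elements and makes reflexivity and symmetry genuinely trivial. Each of your auxiliary claims ($g(t,-y,1^{(n-2)})=-g(t,y,1^{(n-2)})$ via distributivity, and the two-sided use of uniqueness of inverses) is sound. One caveat you share with the paper: the relation as displayed in the paper omits the minus sign on $g(r^{\prime},s,1^{(n-2)})$ and recycles the letter $s$ for the witness; you correctly read it with the sign and a fresh witness $t$, as the paper itself does inside its proof.
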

\begin{proof}
Clearly, $\sim$ is reflexive and symmetric. Suppose that $(r_1, s_1) \sim (r_2, s_2)$ and $(r_2, s_2) \sim (r_3, s_3)$. Then there exist $ s \in S$ such that

$0 \in g(s,f(g(r_1,s_2,1^{(n-2)}),-g(r_2,s_1,1^{(n-2)}),0^{(m-2)}),1^{(n-2)})$\\
and

$0 \in g(s^\prime,f(g(r_2,s_3,1^{(n-2)}),-g(r_3,s_2,1^{(n-2)}),0^{(m-2)}),1^{(n-2)})$.\\
Since 

$0 \in g(s,f(g(r_1,s_2,1^{(n-2)}),-g(r_2,s_1,1^{(n-2)}),0^{(m-2)}),1^{(n-2)})$

$\hspace{0.3cm}=f(g(s,r_1,s_2,1^{(n-3)}),-g(s,r_2,s_1,1^{(n-2)}),0^{(m-2)})$,\\  we get $g(s,r_2,s_1,1^{(n-2)}) \in f(g(s,r_1,s_2,1^{(n-3)}),0^{(m-1)})$.\\
 Thus we have\\
$0=g(g(s,s_1,1^{(n-2)}),0^{(n-1)})$

$ \in g(g(s,s_1,1^{(n-2)}),g(s^\prime,f(g(r_2,s_3,1^{(n-2)}),-g(r_3,s_2,1^{(n-2)}),0^{(m-2)}),1^{(n-2)}),$

$\hspace{0.2cm}1^{(n-2)})$

$=g(g(s,s_1,1^{(n-2)}),f(g(s^\prime,r_2,s_3,1^{(n-3)}),
-g(s^\prime,r_3,s_2,1^{(n-3)}),0^{(m-2)}),1^{(n-2)})$

$=f(g(s,s_1,s^\prime,r_2,s_3,1^{(n-5)}),
-g(s,s_1,s^\prime,r_3,s_2,1^{(n-5)}),0^{(m-2)})$

$=f(g(s^\prime,g(s,r_2,s_1,1^{(n-3)}),s_3),-g(s,s_1,s^\prime,r_3,s_2,1^{(n-3)}),0^{(m-3)})$

$\subseteq f(g(s^\prime,f(g(s,r_1,s_2,1^{(n-3)}),0^{(m-1)}),s_3,1^{(n-3)}),-g(s,s_1,s^\prime,r_3,s_2,1^{(n-3)}),$

$\hspace{0.2cm} 0^{(m-3)})$

$=f(g(s,s^\prime,s_2,r_1,s_3),-g(s,s^\prime,s_2,r_3,s_1),0^{(m-2)})$

$=g(g(s,s^\prime,s_2,1^{(n-3)}),f(g(r_1,s_3,1^{(n-2)})
,-g(r_3,s_1,1^{(n-2)}),0^{(m-2)}),1^{(n-2)}).$\\
Since $g(s,s^\prime,s_2,1^{(n-3)}) \in S$, then $(r_1,s_1) \sim (r_3,s_3)$.
Consequently,  $\sim$ is transitive.
\end{proof}
We denote the equivalence class of $(a, s)$ with $\frac{r}{s}$ and let $S^{-1}R$ denote the set
of all equivalence classes. 
We endow the set $S^{-1}R$ with a Krasner $(m,n)$-hyperring structure, by defining the $m$-ary hyperoperation $F$ and the $n$-ary operation $G$
 as follows: \\
 $F(\frac{r_1}{s_1},...,\frac{r_m}{s_m})=\frac{f(g(r_1,s_2^m,1^{(n-m)}),g(s_1,r_2,s_3^m,1^{(n-m)}),...,g(s_1^{m-1},r_m,1^{(n-m)})}{g(s_1^m,1^{(n-m)})}$\\
 $=\{\frac{r}{s}\ \vert \ r \in f(g(r_1,s_2^m,1^{(n-m)}),g(s_1,r_2,s_3^m,1^{(n-m)}),...,g(s_1^{m-1},r_m,1^{(n-m)}),$
 
 $s=g(s_1^m)\}$\\
 $G(\frac{r_1}{s_1},...,\frac{r_n}{s_n})=\frac{g(r_1^n)}{g(s_1^n)}.$\\
 We need to show that $F$ and $G$ are well defined. If $\frac{r_1}{s_1}=\frac{r_1^\prime}{s_1^\prime}$, $\frac{r_2}{s_2}=\frac{r_2^\prime}{s_2^\prime},...,\frac{r_m}{s_m}=\frac{r_m^\prime}{s_m^\prime}$, then there exist $t_1,...,t_m \in S$ such that 
\begin{align*}
0&\in g(t_1,f(g(r_1,{s^\prime}_1,1^{(n-2)}),-g({r^\prime}_1,s_1,1^{(n-2)}),0^{m-2}),1^{(n-2)}) &(1)\\
0&\in g(t_2,f(g(r_2,{s^\prime}_2,1^{(n-2)}),-g({r^\prime}_2,s_2,1^{(n-2)}),0^{m-2}),1^{(n-2)}) &(2)\\
&\vdots\\
0&\in g(t_m,f(g(r_m,{s^\prime}_m,1^{(n-2)}),-g({r^\prime}_m,s_m,1^{(n-2)}),0^{m-2}),1^{(n-2)}). &(m)
\end{align*}
 $g$-producting  (1) by
 
  $g(g(t_2^m,1^{(n-m+1)}),g(1^{(n-m+1)},s_2^m),g(1^{(n-m+1)},{s^\prime}_2^m),1^{(n-3)})$, \\(2) by
  
   $g(g(t_1,1^{(n-m+1)},t_3^m),(s_1,1^{(n-m+1)},s_3^m),g({s^\prime} _1,1^{(n-m+1)},{s^\prime} _3^m),1^{(n-3)})$
   
   $\vdots$ \\
  (m) by 
   
   $g(g(t_1^{m-1},1^{(n-m+1)}),g(s_1^{m-1},1^{(n-m+1)}),g({s^\prime}_1^{m-1},1^{(n-m+1)}),1^{(n-3)})$. \\Thus we get
   
   

   
   
   
   
 
   $0 \in g(g(t_1^m,1^{(n-m)}),f(g(g({s^\prime}_1^m,1^{(n-m)}),g(r_1,s_2^m,1^{(n-m)}),1^{(n-2)}),$
   
   $\hspace{0.5cm}-g(
   (g(s_1^m,1^{(n-m)}),g(r_1^\prime,{s^\prime}_2^m,1^{(n-m)}),1^{(n-2)}),0^{(m-2)}),1^{(n-2)})$
  
   $0\in g(g(t_1^m,1^{(n-m)}),f(g(g({s^\prime}_1^m,1^{(n-m)}),g(r_1,s_1,s_3^m,1^{(n-m)}),1^{(n-2)}),$
   
$\hspace{0.5cm}   -g(
  g(s_1^m,1^{(n-m)}),g(r_2^\prime.{s^\prime}_1,{s^\prime}_3^m,1^{(n-m)}),1^{(n-2)}),0^{(m-2)}),1^{(n-2)})$
   
  $\vdots$
   
  $0 \in g(g(t_1^m,1^{(n-m)}),f(g({s^\prime}_1^m,1^{(n-m)}),g(r_m,s_1^{m-1},1^{(n-m)}),1^{(n-2)}),$
   
$\hspace{0.5cm}   -
  g(g(s_1^m,1^{(n-m)}),g(r_m^\prime,{s^\prime}_1^{m-1},1^{(n-m)}),1^{(n-2)}),0^{(m-2)}),1^{(n-2)})$.\\
   Now, we have

   
   
   
   
   
   
   
   
 $0\in f(f(g(g(t_1^m,1^{(n-m)}),g({s^\prime}_1^m,1^{(n-m)}),g(r_1,s_2^m,1^{(n-m)}),1^{(n-3)}),$
   
 $\hspace{0.3cm}g(g(t_1^m,1^{(n-m)}),g({s^\prime}_1^m,1^{(n-m)}),g(r_1,s_1,s_3^m,1^{(n-m)}),1^{(n-3)}),$
   
  $\hspace{0.3cm} \cdots,$
   
  $\hspace{0.3cm} g(g(t_1^m,1^{(n-m)}),g({s^\prime}_1^m,1^{(n-m)}),g(r_m,s_1^{m-1},1^{(n-m)}),1^{(n-3)})),$
   
   $-f(g(g(t_1^m,1^{(n-m)}),g(s_1^m,1^{(n-m)}),g(r_1^\prime,{s^\prime}_2^m,1^{(n-m+1)}),1^{(n-3)}),$
   
   $\hspace{0.3cm}g(g(t_1^m,1^{(n-m)}),g(s_1^m,1^{(n-m)}),g(r_2^\prime.{s^\prime}_1,{s^\prime}_3^m,1^{(n-m)}),1^{(n-3)}),$
   
   $\hspace{0.3cm} \cdots,$
   
   $\hspace{0.3cm}g(g(t_1^m,1^{(n-m)}),g(s_1^m,1^{(n-m)}),g(r_m^\prime,{s^\prime}_1^{m-1},1^{(n-m)}),1^{(n-3)}),0^{(m-2)})$.\\
   
  We put
   $ t=g(t_1^m,1^{(n-m)}), s=g(s_1^m,1^{(n-m)}))$ and $s^\prime=g({s^\prime}_1^m,1^{(n-m)})$.\\
  Therefore we have

$0 \in f(g(t,g(s^\prime,f(g(r_1,s_2^m,1^{(n-m)}),g(r_2^\prime.{s^\prime}_1,{s^\prime}_3^m,1^{(n-m)}),\cdots,
g(r_m,s_1^{m-1},1^{(n-m)}),$

$\hspace{0.5cm}1^{(n-2)}),- g(t,g(s,f(g(r_1^\prime,{s^\prime}_2^m,1^{(n-m)}),
g(r_2^\prime.{s^\prime}_1,{s^\prime}_3^m,1^{(n-m)}),
\cdots,g(r_m^\prime,{s^\prime}_1^{m-1},$

$\hspace{0.5cm}1^{(n-m)}),1^{(n-2)},0^{(m-2)})$.
\\Thus $F(\frac{r_1}{s_1},...,\frac{r_m}{s_m})=F(\frac{r^\prime_1}{s^\prime_1},...,\frac{r^\prime_m}{s^\prime_m})$, i. e.,  $F$ is well defined.

Now, suppose that $\frac{r_1}{s_1}=\frac{r_1^\prime}{s_1^\prime}$, $\frac{r_2}{s_2}=\frac{r_2^\prime}{s_2^\prime},...,\frac{r_n}{s_n}=\frac{r_n^\prime}{s_n^\prime}$, then there exist $t_1,...,t_n \in S$ such that 

$0 \in g(t_1,f(g(r_1,s_1^\prime,1^{(n-2)}),-g(r_1^\prime,s_1,1^{(n-2)}),0^{(m-2)}),1^{(n-2)})$

 $0 \in g(t_2,f(g(r_2,s_2^\prime,1^{(n-2)}),-g(r_2^\prime,s_2,1^{(n-2)}),0^{(m-2)}),1^{(n-2)})$

  $\vdots$

   $0 \in g(t_n,f(g(r_n,s_n^\prime,1^{(n-2)}),-g(r_n^\prime,s_n^\prime,1^{(n-2)}),0^{(m-2)}),1^{(n-2)})$.\\
 Then we conclude that
 
 
 
 $0 \in f(g(g(t_1,r_1,s_1^\prime,1^{(n-3)}),g(t_2,r_2,s_2^\prime,1^{(n-3)}),..., g(t_n,r_n,s_n^\prime,1^{(n-3)}),1^{(n-m)}),$
 
 $\hspace{0.3cm}-g(g(t_1,r_1^\prime,s_1,1^{(n-3)}),g(t_2,r_2^\prime,s_2,1^{(n-3)}),...,g(t_n,r_n^\prime,s_n^\prime,1^{(n-3)}),1^{(n-m)}),0^{(m-2)}).$\\
 It means 
 
 
 $0 \in f(g(g(t_1^n),g(r_1^n),g({s^\prime}_1^n),1^{(n-3)}),
 -g(g(t_1^n),g({r^\prime}_1^n),g(s_1^n),1^{(n-3)}),0^{(m-2)}).$\\
   Put $t=g(t_1^n)$. We have

 
 $0 \in f(g(t,g(r_1^n),g({s^\prime}_1^n),1^{(n-3)}),
 -g(t,g({r^\prime}_1^n),g(s_1^n),1^{(n-3)}),0^{(m-2)})$ \\and so
 
 $0 \in g(t,f(g(g(r_1^n),g({s^\prime}_1^n),1^{(n-2)}),
 -g(g(t,g({r^\prime}_1^n),g(s_1^n),1^{(n-2)}),0^{(m-2)}),1^{(n-2)})$.\\ 
 It implies that $\frac{g(r_1^n)}{g(s_1^n)}=\frac{g({r^\prime}_1^n)}{g({s^\prime}_1^n)}$ and so  $G(\frac{r_1}{s_1},...,\frac{r_n}{s_n})=G(\frac{r^\prime_1}{s^\prime_1},...,\frac{r^\prime_n}{s^\prime_n})$, i. e.,  $G$ is well defined. 
 
 \begin{lem} \label{11}
 Let $R$ be a Krasner $(m,n)$-hyperring and $S$ be an $n$-ary multiplicative subset of $R$ with $1 \in S$. Then:

 $1)$ For all $s \in S$, $\frac{0}{1}=\frac{0}{s}=0_{S^{-1}R}$.

 $2)$ $\frac{r}{s}=0_{S^{-1}R}$, for $r \in R, s \in S$ if and only if there exists $t \in S$ such that 
 
 $\hspace{0.5cm}g(t,r,1^{n-2})=0$.

 $3)$ For all $s \in S$, $\frac{s}{s}=\frac{1}{1}=1_{S^{-1}R} $.

 $4)$ $\frac{g(r,s^{(m-1)},1^{(n-m)})}{g(s^\prime,s^{(m-1)},1^{(n-m)})}=\frac{g(r,1^{(n-1)})}{g(s^\prime,1^{(n-1)})}$, for $r \in R$ and $s,s^\prime \in S$.
\end{lem}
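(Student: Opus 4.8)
The plan is to treat each of the four items as a direct verification against the defining relation $\sim$, supplemented (for items (1) and (3)) by a check that the indicated class really is the scalar identity of the relevant operation. Throughout I would lean on four facts established earlier: $0$ is absorbing for $g$, so every product containing a factor $0$ equals $0$; $0$ is the scalar identity of $f$ (so $f(x,0^{(m-1)})=x$) and each element has an inverse with $0\in f(x,-x,0^{(m-2)})$; $1$ is the scalar identity of $g$ (so $g(x,1^{(n-1)})=x$); and $g$ distributes over $f$, which lets me push a known membership $0\in f(\cdots)$ through an outer $g(t,-,1^{(n-2)})$. The single-valuedness of $g$ (it is an $n$-ary operation, not a hyperoperation) is also used repeatedly to turn a membership $0\in g(\cdots)$ into an equality.

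For (1), unwinding $(0,1)\sim(0,s)$ I get the requirement $0\in g(t,f(g(0,s,1^{(n-2)}),-g(0,1,1^{(n-2)}),0^{(m-2)}),1^{(n-2)})$ for some $t\in S$; both inner $g$-terms vanish by absorption, the bracket collapses to $f(0^{(m)})=\{0\}$, and any $t$ (say $t=1$) witnesses the relation, giving $\frac{0}{1}=\frac{0}{s}$. To identify this class as $0_{S^{-1}R}$ I would compute $F(\frac{r}{s},(\frac{0}{1})^{(m-1)})$ from the definition of $F$: every summand in the numerator except the first carries a factor $0$ and hence vanishes, the first summand is $g(r,1^{(n-1)})=r$, and the denominator is $g(s,1^{(n-1)})=s$, so the result is $\frac{r}{s}$; thus $\frac{0}{1}$ is the scalar identity of $F$. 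Item (2) is the same computation run backwards: $\frac{r}{s}=\frac{0}{1}$ means $0\in g(t,f(g(r,1^{(n-1)}),-g(0,s,1^{(n-2)}),0^{(m-2)}),1^{(n-2)})$ for some $t\in S$, and using $g(0,s,1^{(n-2)})=0$ together with $f(r,0^{(m-1)})=r$ this is equivalent to $g(t,r,1^{(n-2)})=0$, exactly the asserted condition.

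For (3), $(s,s)\sim(1,1)$ follows because $g(s,1^{(n-1)})=s=g(1,s,1^{(n-2)})$ turns the bracket into $f(s,-s,0^{(m-2)})$, which contains $0$; distributivity then yields $0=g(t,0,1^{(n-2)})\in g(t,f(s,-s,0^{(m-2)}),1^{(n-2)})$, and $\frac{1}{1}$ is seen to be the scalar identity of $G$ via $G(\frac{r}{s},(\frac{1}{1})^{(n-1)})=\frac{g(r,1^{(n-1)})}{g(s,1^{(n-1)})}=\frac{r}{s}$. Item (4) is the one requiring genuine care: first $g(s',s^{(m-1)},1^{(n-m)})\in S$ because $S$ is $n$-ary multiplicative and $1\in S$, so the left-hand fraction is legitimate; then I would use associativity and commutativity of $g$ to flatten both $g(g(r,s^{(m-1)},1^{(n-m)}),s',1^{(n-2)})$ and $g(r,g(s',s^{(m-1)},1^{(n-m)}),1^{(n-2)})$ to the same element $c=g(r,s',s^{(m-1)},1^{(n-m-1)})$, after which the bracket is $f(c,-c,0^{(m-2)})\ni0$ and $t=1$ witnesses the equivalence. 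The main obstacle is precisely this flattening: one must track the counts of scalar $1$'s through the nested $g$'s so that associativity and commutativity legitimately identify the two products; once that bookkeeping is done, items (1)--(3) are immediate from absorption and the inverse axiom.
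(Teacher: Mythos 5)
Your proposal is correct, and for items (1)--(3) it is essentially the paper's own argument: establish $(0,1)\sim(0,s)$ and $(s,s)\sim(1,1)$ by direct verification of the defining relation (using absorption of $0$, the identity $g(x,1^{(n-1)})=x$, and $0\in f(x,-x,0^{(m-2)})$), then identify $\frac{0}{1}$ and $\frac{1}{1}$ as the neutral elements by computing $F(\frac{r}{s},(\frac{0}{1})^{(m-1)})=\frac{r}{s}$ and $G(\frac{r}{s},(\frac{1}{1})^{(n-1)})=\frac{r}{s}$; item (2) is the same unwinding in both directions, exactly as in the paper. The only genuine divergence is item (4). You verify the equivalence $(g(r,s^{(m-1)},1^{(n-m)}),\,g(s',s^{(m-1)},1^{(n-m)}))\sim(r,s')$ head-on, flattening the two cross-products to a common element $c=g(r,s',s^{(m-1)},1^{(n-m-1)})$ by associativity and commutativity of $g$ and invoking $0\in f(c,-c,0^{(m-2)})$ with witness $t=1$. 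The paper instead computes $F(\frac{r}{s'},(\frac{0}{s})^{(m-1)})$ and $F(\frac{r}{s'},(\frac{0}{1})^{(m-1)})$, observes they equal the two sides of (4), and concludes from $\frac{0}{s}=\frac{0}{1}$ (part (1)) together with the already-established well-definedness of $F$. Your route is more self-contained (it does not lean on the well-definedness proof of $F$, only on the equivalence relation itself) at the cost of the arity bookkeeping you correctly flag as the delicate point; the paper's route outsources that bookkeeping to the earlier well-definedness argument. Both are sound.
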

\begin{proof}
$(1)$ Let $t \in S$. Then for all $s \in S$ we have

$  0=g(t,s,0,1^{(n-3)})$
 
 $\hspace{0.3cm}  = g(t,g(0,s,1^{(n-2)}),1^{(n-2)})$

 $\hspace{0.3cm} = g(t,f(g(0,s,1^{(n-2)}),0^{(m-1)}),1^{(n-2)})$

 $\hspace{0.3cm} = g(t,f(g(0,s,1^{(n-2)}),-g(1,0,1^{(n-2)}),0^{(m-2)}),1^{(n-2)}).$\\
 Then we conclude that $\frac{0}{1}=\frac{0}{s}=0_{S^{-1}R}$. Now, we show that $\frac{0}{1}=0_{S^{-1}R}$. Let $r \in R$ and $s \in S$. Then 
 
 $F(\frac{r}{s},\frac{0}{1}^{(m-1)})=
 \{\frac{u}{v} \ \vert \ u \in f(g(r,1^{(n-1)}), g(0,s,1^{(n-2)})^{(m-1)}) , s=g(s,1^{n-1}) \}$
 
 $\hspace{2.1cm}=
 \{\frac{u}{v} \ \vert \ u \in f(g(r,1^{(n-1)}), 0^{(m-1)}) , v=g(s,1^{n-1}) \}$

  $\hspace{2.1cm}=
 \{\frac{u}{v} \ \vert \ u \in f(r, 0^{(m-1)}) , s=g(s,1^{n-1}) \}$
 
  $\hspace{2.1cm}=
 \{\frac{u}{v} \ \vert \ u = r, s=g(s,1^{n-1}) \}.$\\
 Thus $F(\frac{r}{s},\frac{0}{1}^{(m-1)})=\frac{r}{s}$.  Consequently $\frac{0}{1}=0_{S^{-1}R}$.

 $(2)$ $(\Longrightarrow):$ Let $\frac{r}{s}=0_{S^{-1}R}$ for $r \in R, s \in S$. By $(1)$, we have $\frac{r}{s}=\frac{0}{1}$. Hence there exists $t \in S$ such that 
 
 $\hspace{0.6cm} 0 \in g(t,f(g(r,1^{n-1)}),-g(0,s,1^{(n-2)}),0^{(m-2)}),1^{(n-2)})$.\\
 Therefore $ 0 \in g(t,f(r,0^{(m-1)}),1^{(n-2)})$. It means $g(t,r,1^{(n-2)})=0$. \\
 $(\Longleftarrow): $ Let $g(t,r,1^{(n-2)})=0$ for some $t \in S$. Then $ 0 = g(t,f(r,0^{(m-1)}),1^{(n-2)})$. Since  $r=g(r,1^{n-1)})$ and $0=g(0,s,1^{(n-2)})$,  we get 
 
 $ 0 = g(t,f(g(r,1^{n-1)}),g(0,s,1^{(n-2)}),0^{(m-2)}),1^{(n-2)})$.\\ Then $\frac{r}{s}=\frac{0}{1}$ and so $\frac{r}{s}=0_{S^{-1}R}$, by $(1)$.

 $(3)$ Let  $s \in S$. It is clear that  $0= g(0,1^{(n-1)})$. Then we get

 $\hspace{0.5cm}0 = g(1,f(g(s,1,0^{(n-2)}),-g(1,s,1^{(n-2)}),0^{(m-2)}),1^{(n-2)}).$\\
 It means $\frac{s}{s}=\frac{1}{1}$. Now, we show that $\frac{1}{1}=1_{S^{-1}R}$.
 Let $r \in R$ and $s \in S$. Then we have

 $\hspace{0.6cm}G(\frac{r}{s},\frac{1}{1}^{(n-1)})=\frac{g(r,1^{(n-1)}}{g(s,1^{(n-1)})}=\frac{r}{s}.$\\
This implies that $\frac{1}{1}=1_{S^{-1}R}$.

$(4)$ Let $r \in R$ and $s,s^\prime \in S$. Clearly, 

$F(\frac{r}{s^\prime},{\frac{0}{s}}^{(m-1)})=\frac{f(g(r,s^{(m-1)},1^{(n-m)}),g(s^\prime,0,s^{(m-2)},1^{(n-m)})^{(m-1)})}{g(s^\prime,s,1^{(n-2)})}$

$\hspace{2.2cm}=\frac{f(g(r,s^{(m-1)},1^{(n-m)}),0^{(m-1)})}{g(s^\prime,s^{(m-1)},1^{(n-m)})}$

 $\hspace{2.2cm}=\frac{g(r,s^{(m-1)},1^{(n-m)})}{g(s^\prime,s^{(m-1)},1^{(n-m)})}$.\\
 On the other hand,

 $F(\frac{r}{s^\prime},{\frac{0}{1}}^{(m-1)})=\frac{f(g(r,1^{(n-1)}),g(s^\prime,0,1^{(n-2)})^{(m-1)})}{g(s^\prime,1^{(n-1)})}$

$\hspace{2.2cm}=\frac{f(g(r,1^{(n-1)}),0^{(m-1)})}{g(s^\prime,1^{(n-1)})}$

 $\hspace{2.2cm}=\frac{g(r,1^{(n-1)})}{g(s^\prime,1^{(n-1)})}.$
 \end{proof}
 \begin{definition}
 Let $R$ be a Krasner $(m,n)$-hyperring and $S$ be an $n$-ary multiplicative subset of $R$ with $1 \in S$. The mapping $\phi :R \longrightarrow S^{-1}R$, defined by $r \longrightarrow \frac{r}{1}$, is called natural map. 
 \end{definition}
 \begin{theorem} \label{12}
 The natural map $\phi$ is a homomorphism of Krasner $(m,n)$-hyperring.
 \end{theorem}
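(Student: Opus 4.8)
The plan is to verify directly the two defining conditions of a homomorphism of Krasner $(m,n)$-hyperrings, namely that $\phi$ carries the $m$-ary hyperoperation $f$ to $F$ and the $n$-ary operation $g$ to $G$. Since $f$ is set-valued, the first condition must be read as $\phi(f(x_1^m)) = F(\tfrac{x_1}{1},\dots,\tfrac{x_m}{1})$, where $\phi(f(x_1^m)) = \{\tfrac{a}{1}\mid a\in f(x_1^m)\}$. The only facts I will need are that $1\in S$, so that every $\tfrac{x}{1}$ is a legitimate element of $S^{-1}R$, and the scalar identity property $g(x,1^{(n-1)}) = x$, from which $g(1^{(n)}) = 1$ follows by taking $x=1$.

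I would dispose of the $G$-condition first, as it is immediate. Evaluating the defining formula for $G$ at the arguments $\tfrac{x_1}{1},\dots,\tfrac{x_n}{1}$ gives $G(\tfrac{x_1}{1},\dots,\tfrac{x_n}{1}) = \tfrac{g(x_1^n)}{g(1^{(n)})} = \tfrac{g(x_1^n)}{1} = \phi(g(x_1^n))$, using $g(1^{(n)}) = 1$. This is exactly the required identity $\phi(g(x_1^n)) = G(\phi(x_1),\dots,\phi(x_n))$.

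For the $F$-condition I would substitute $s_1=\dots=s_m=1$ into the defining formula for $F$. Each numerator entry has the shape $g(1^{(i-1)},x_i,1^{(m-i)},1^{(n-m)})$, which by commutativity equals $g(x_i,1^{(n-1)}) = x_i$, while the denominator $g(1^m,1^{(n-m)}) = g(1^{(n)})$ collapses to $1$. Hence $F(\tfrac{x_1}{1},\dots,\tfrac{x_m}{1}) = \tfrac{f(x_1,\dots,x_m)}{1} = \{\tfrac{a}{1}\mid a\in f(x_1^m)\} = \phi(f(x_1^m))$, which is the first homomorphism condition.

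The computation carries no real obstacle once $F$ and $G$ are known to be well defined, which is established just above the statement; the only points requiring care are keeping track of the set-valued reading of $\phi\circ f$ against $F$, and checking that every $g$-term genuinely reduces via the scalar identity so that all denominators become $1$. I would close by noting that these reductions are precisely the scalar-identity simplifications already used in the proof of Lemma \ref{11}, so no additional verification is needed.
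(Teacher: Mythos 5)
Your proposal is correct and follows essentially the same route as the paper: both arguments reduce to substituting $s_1=\dots=s_m=1$ (resp.\ $s_1=\dots=s_n=1$) into the defining formulas for $F$ and $G$, using the scalar identity $g(x,1^{(n-1)})=x$ and $g(1^{(n)})=1$ to collapse every term, with the set-valued reading of $\phi\circ f$ handled exactly as in the paper. No gap; nothing further is needed.
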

 \begin{proof}
 Let $R$ be a Krasner $(m,n)$-hyperring and $S$ be an $n$-ary multiplicative subset of $R$ with $1 \in S$. For all $r_1^m \in R$, we get
 
 $\hspace{1cm}\phi(f(r_1^m))=\frac{f(r_1^m)}{1}$
 
  $\hspace{2.5cm}=\frac{f(g(r_1,1^{(n-1)}),g(r_2,1^{(n-1)}),...,g(r_m,1^{(n-1)}))}{g(1^{(m)},1^{(n-m)})}$
  
 $\hspace{2.5cm}=\{\frac{r}{1}\ \vert \ r \in f(g(r_1,1^{(n-1)}),g(r_2,1^{(n-1)}),...,g(r_m,1^{(n-1)})\}$
 
 $\hspace{2.5cm}=F(\frac{r_1}{1},...,\frac{r_m}{1})$
 
 $\hspace{2.5cm}=F(\phi(r_1),...,\phi(r_m))$.\\
 Also, for all $r_1^n \in R$, we have
 
 $\hspace{1cm}\phi(g(r_1^n))=\frac{g(r_1^n)}{1}$
 
 $\hspace{2.5cm}=\frac{g(r_1^n)}{g(1^{(n)})}$

 $\hspace{2.5cm}=G(\frac{r_1}{1},...,\frac{r_1}{1})$
 
 $\hspace{2.5cm}=G(\phi(r_1),...,\phi(r_n))$.
 \end{proof}
 \begin{theorem} \label{13}
 Let $\frac{r}{s}$ be an nonzero element of $S^{-1}R$. Then
 
 $1)$ For all $s \in S$, $\phi(s)$ is an
 invertible element of $S^{-1}R$.
 
 $2)$ If $\phi(r)=0$, then there exists $t \in S$ such that $g(t,r,1^{(n-2)})=0$.
 
 $3)$ $\frac{r}{s}=G(\phi(r),\phi(s)^{-1},{\frac{1}{1}}^{(n-2)})$, for all $\frac{r}{s} \in S^{-1}R$. 
 \end{theorem}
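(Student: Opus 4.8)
The plan is to verify all three parts by direct computation from the definition of the $n$-ary operation $G(\frac{r_1}{s_1},\dots,\frac{r_n}{s_n})=\frac{g(r_1^n)}{g(s_1^n)}$ together with Lemma \ref{11}; none of the parts requires more than careful bookkeeping with the scalar identity $1$ of $R$ and commutativity of $g$. (The ``nonzero'' hypothesis in the statement is needed only to make the phrase $\phi(s)^{-1}$ meaningful in part $(3)$; parts $(1)$ and $(2)$ hold unconditionally.)

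For part $(1)$, I would exhibit the inverse explicitly by taking $\frac{1}{s}$ as the candidate and checking it. Since $\phi(s)=\frac{s}{1}$ and the scalar identity of $S^{-1}R$ is $1_{S^{-1}R}=\frac{1}{1}$ by Lemma \ref{11}$(3)$, I compute
\[
G\Big(\tfrac{s}{1},\tfrac{1}{s},\big(\tfrac{1}{1}\big)^{(n-2)}\Big)
=\frac{g(s,1,1^{(n-2)})}{g(1,s,1^{(n-2)})}
=\frac{g(s,1^{(n-1)})}{g(s,1^{(n-1)})}
=\frac{s}{s},
\]
where commutativity of $g$ brings $s$ into the first slot of the denominator and the scalar-identity property gives $g(s,1^{(n-1)})=s$. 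By Lemma \ref{11}$(3)$ we have $\frac{s}{s}=1_{S^{-1}R}$, so $\phi(s)$ satisfies the invertibility definition with $\phi(s)^{-1}=\frac{1}{s}$.

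Part $(2)$ is an immediate translation of the zero-class criterion. Since $\phi(r)=\frac{r}{1}=0_{S^{-1}R}$ and $1\in S$, applying Lemma \ref{11}$(2)$ with denominator equal to $1$ yields at once an element $t\in S$ with $g(t,r,1^{(n-2)})=0$, which is exactly the claim.

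For part $(3)$, I would reuse the identification $\phi(s)^{-1}=\frac{1}{s}$ established in part $(1)$ and compute directly
\[
G\Big(\phi(r),\phi(s)^{-1},\big(\tfrac{1}{1}\big)^{(n-2)}\Big)
=G\Big(\tfrac{r}{1},\tfrac{1}{s},\big(\tfrac{1}{1}\big)^{(n-2)}\Big)
=\frac{g(r,1,1^{(n-2)})}{g(1,s,1^{(n-2)})}
=\frac{g(r,1^{(n-1)})}{g(s,1^{(n-1)})}
=\frac{r}{s},
\]
again using commutativity and the scalar identity to collapse the strings of $1$'s. I do not anticipate a genuine obstacle: the entire content sits inside Lemma \ref{11}, and the only place demanding attention is distinguishing the $1$'s coming from the scalar identity of $R$ from the padding $1^{(n-2)}$ in the $n$-ary operation, and invoking commutativity before collapsing each $g$-string.
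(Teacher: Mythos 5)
Your proposal is correct and follows essentially the same route as the paper: exhibit $\frac{1}{s}$ as the inverse of $\phi(s)$ and verify $G(\frac{s}{1},\frac{1}{s},(\frac{1}{1})^{(n-2)})=1_{S^{-1}R}$ via Lemma \ref{11}(3), read off part (2) from Lemma \ref{11}(2), and obtain part (3) by the same direct computation $\frac{r}{s}=\frac{g(r,1^{(n-1)})}{g(s,1^{(n-1)})}=G(\frac{r}{1},\frac{1}{s},(\frac{1}{1})^{(n-2)})$. The only difference is cosmetic (you pass through $\frac{s}{s}$ where the paper writes $\frac{1}{1}$ directly), so nothing further is needed.
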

 \begin{proof}
 $(1)$ Let $s \in S$. Then we have
 
 $\hspace*{1cm}G(\frac{s}{1},\frac{1}{s},{\frac{1}{1}}^{(n-2)})=\frac{g(s,1^{(n-1)})}{g(1,s,1^{(n-2)})}$
 
 $\hspace{3.4cm}=\frac{g(s,1^{(n-1)})}{g(s,1^{(n-1)})}$
 
 $\hspace{3.4cm}=\frac{1}{1} \hspace*{3cm} \text{by Lemma \ref{11} (3) }$
 
 $\hspace{3.4cm}=1_{S^{-1}R}.$
 
 $(2)$ It is clear by \ref{11} (2).
 
 $(3)$ Let $\frac{r}{s} \in S^{-1}R$. Then

 $\hspace*{1cm}\frac{r}{s}=\frac{g(r,1^{(n-1)})}{g(s,1^{(n-1)})}$
 
 $\hspace{1.3cm}=G(\frac{r}{1},\frac{1}{s},{\frac{1}{1}}^{(n-2)})$
 
  $\hspace{1.3cm}=G(\phi(r),\phi(s)^{-1},{\frac{1}{1}}^{(n-2)})$.
 \end{proof}
 \begin{theorem} \label{14}
 Let $(R_1,f_1,h_1)$ and $(R_2,f_2,g_2)$ be two Krasner $(m,n)$-hyperrings and $S$ be an $n$-ary multiplicative subset of $R_1$ with $1 \in S$. Let $k:R_1\longrightarrow R_2$ be a homomorphism such that for each $s \in S$, $k(s)$ is an invertible element of $R_2$. Then there exists an unique homomorphism $h:S^{-1}R_1 \longrightarrow R_2$ such that $ho\phi=k$.
 \end{theorem}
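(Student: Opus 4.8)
The plan is to define $h$ by the only formula that can be compatible with both $h\circ\phi=k$ and the requirement that $h$ preserve $G$, namely
\[
h\Bigl(\tfrac{r}{s}\Bigr)=g_2\bigl(k(r),\,k(s)^{-1},\,1^{(n-2)}\bigr),
\]
where $k(s)^{-1}$ denotes the inverse in $R_2$ of the invertible element $k(s)$. Before anything else I would record the elementary facts that $k$ preserves the scalar identity, i.e. $k(1)=1$ (this follows because $1=h_1(1^{(n)})$ forces $k(1)=g_2(k(1)^{(n)})$, and invertibility of $k(1)$ then pins it to the scalar identity), that $k$ preserves the zero, and that $k$ preserves additive inverses, $k(-x)=-k(x)$, since it is a morphism of the canonical $m$-ary hypergroups. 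These guarantee that every $k(s)^{-1}$ exists and that $h$ indeed lands in $R_2$.

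The principal obstacle is \emph{well-definedness}. Suppose $\frac{r}{s}=\frac{r'}{s'}$; then there is $t\in S$ with
\[
0\in h_1\bigl(t,\,f_1(h_1(r,s',1^{(n-2)}),\,-h_1(r',s,1^{(n-2)}),\,0^{(m-2)}),\,1^{(n-2)}\bigr).
\]
Applying $k$ and using that it commutes with $f_1,h_1$ and with inversion, this transports to a relation in $R_2$ of the form $0\in f_2(\,A,\,-B,\,0^{(m-2)})$ with $A=g_2(k(t),k(r),k(s'),1^{(n-3)})$ and $B=g_2(k(t),k(r'),k(s),1^{(n-3)})$, whence $A=B$. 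I would then $g_2$-multiply both sides by the invertible element $g_2(k(t)^{-1},k(s)^{-1},k(s')^{-1},1^{(n-3)})$ and regroup; invertibility of $k(t)$ is exactly what licenses cancelling the $k(t)$ factor, leaving $g_2(k(r),k(s)^{-1},1^{(n-2)})=g_2(k(r'),k(s')^{-1},1^{(n-2)})$, i.e. $h(\frac{r}{s})=h(\frac{r'}{s'})$. The delicate part here is the bookkeeping of the padding $1$'s together with the associativity and commutativity of $g_2$ needed to regroup the factors correctly.

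Next I would check that $h$ respects $G$ and $F$. For $G$ the computation is direct: $h(G(\frac{r_1}{s_1},\dots,\frac{r_n}{s_n}))=h(\frac{h_1(r_1^n)}{h_1(s_1^n)})=g_2(k(h_1(r_1^n)),k(h_1(s_1^n))^{-1},1^{(n-2)})$, and pushing $k$ through $h_1$ and using that the inverse of a $g_2$-product is the $g_2$-product of the inverses gives $g_2(h(\frac{r_1}{s_1}),\dots,h(\frac{r_n}{s_n}))$. For $F$ I would expand the definition of $F$, carry $k$ through the $m$-ary sum and through the distributivity of $g_2$ over $f_2$ (axiom (3) of a Krasner $(m,n)$-hyperring, which is precisely the tool that makes the numerators align), and clear the common denominator by invertibility to obtain $f_2(h(\frac{r_1}{s_1}),\dots,h(\frac{r_m}{s_m}))$. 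The relation $h\circ\phi=k$ is then immediate, since $h(\phi(r))=h(\frac{r}{1})=g_2(k(r),k(1)^{-1},1^{(n-2)})=g_2(k(r),1^{(n-1)})=k(r)$.

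Finally, uniqueness follows formally from Theorem \ref{13}(3). If $h'$ is any homomorphism with $h'\circ\phi=k$, then for every $\frac{r}{s}\in S^{-1}R_1$ we have $\frac{r}{s}=G(\phi(r),\phi(s)^{-1},\frac{1}{1}^{(n-2)})$; applying $h'$ and using that it preserves $G$, inverses, and the identity $\frac{1}{1}$ yields $h'(\frac{r}{s})=g_2(k(r),k(s)^{-1},1^{(n-2)})=h(\frac{r}{s})$. Hence $h'=h$, which completes the argument.
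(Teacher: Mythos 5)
Your proposal is correct and follows essentially the same route as the paper: define $h(\frac{r}{s})=g_2(k(r),k(s)^{-1},1^{(n-2)})$, verify well-definedness by applying $k$ to the equivalence witness and cancelling the invertible factors $k(t),k(s),k(s')$, check compatibility with $F$ and $G$ via distributivity, and derive uniqueness from the factorization $\frac{r}{s}=G(\phi(r),\phi(s)^{-1},\frac{1}{1}^{(n-2)})$. The only difference is cosmetic — you explicitly record $k(1)=1$ and $k(-x)=-k(x)$, which the paper uses tacitly.
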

 \begin{proof}
 Let $(R_1,f_1,h_1)$,  $(R_2,f_2,g_2)$  and $(S^{-1}R_1,G,F)$ be  Krasner $(m,n)$-hyperrings such that $S$ is an $n$-ary multiplicative subset of $R_1$ and $1 \in S$. Define mapping $h$ from $S^{-1}R_1$ to $ R_2$ as follows: 
 
 $h(\frac{r}{s})=g_2(k(a),k(s)^{-1},1^{(n-2)})$.\\
 We need to show that $h$ is well defined. Let $\frac{r_1}{s_1}=\frac{r^\prime}{s^\prime}$. Then there exists $t \in S$ such that 
 
 $0 \in g_1(t,f_1(g_1(r,s^\prime,1^{(n-2)}),-g_1(r^\prime,s,1^{(n-2)}),0^{(m-2)}),1^{(n-2)})$.
 
$\hspace{0.3cm}=f_1(g_1(t,r,s^\prime,1^{(n-2)}),-g_1(t,r^\prime,s,1^{(n-2)}),0^{(m-2)})$.\\
 Hence

 $0 \in k(f_1(g_1(t,r,s^\prime,1^{(n-2)}),-g_1(t,r^\prime,s,1^{(n-2)}),0^{(m-2)})$
  
$\hspace{0.3cm}=f_2(k(g_1(t,r,s^\prime,1^{(n-2)}),
k(-g_1(t,r^\prime,s,1^{(n-2)}),k(0)^{(m-2)})$

$\hspace{0.3cm}=f_2(k(g_1(g_1(t,1^{(n-1)}),g_1(r,1^{(n-1)}),g_1(s^\prime,1^{(n-1)}),1^{(n-3)})),$

$\hspace{0.7cm}k(-g_1(g_1(t,1^{(n-1)}),g_1(r^\prime,1^{(n-1)}),g_1(s,1^{(n-1)}),1^{(n-3)})),k(0)^{(m-2)})$

$\hspace{0.3cm}=f_2(g_2(k(g_1(t,1^{(n-1)})),k(g_1(r,1^{(n-1)})),k(g_1(s^\prime,1^{(n-1)})),1^{(n-3)}),$

$\hspace{0.7cm}-g_2(k(g_1(t,1^{(n-1)})),k(g_1(r^\prime,1^{(n-1)})),k(g_1(s,1^{(n-1)})),1^{(n-3)})),k(0)^{(m-2)})$

$\hspace{0.3cm}=f_2(g_2(k(t),k(r)),k(s^\prime),1^{(n-3)}),$

$\hspace{0.7cm}-g_2(k(t),k(r^\prime),k(s),1^{(n-3)})),0^{(m-2)})$

 $\hspace{0.3cm}=f_2(g_2(k(t),g_2(k(r)),k(s^\prime),1^{(n-2)}),1^{(n-2)}),$

$\hspace{0.7cm}-g_2(k(t),g_2(k(r^\prime),k(s),1^{(n-2)}),1^{(n-2)}),0^{(m-2)})$

 $\hspace{0.3cm}=g_2(k(t),f_2(g_2(k(r)),k(s^\prime),1^{(n-2)}),$

$\hspace{0.7cm}-g_2(k(r^\prime),k(s),1^{(n-2)})),0^{(m-2)}),1^{(n-2)})$.\\
Since $k(t)$, $k(s)$ and $k(s^\prime)$ are  invertible elements in $R_2$,  we get

$0=g_2(k(t)^{-1}, k(s)^{-1},k(s^\prime)^{-1},1^{(n-4)},0)$

$\hspace{0.3cm} \in g_2(g_2(k(t)^{-1}, k(s)^{-1},k(s^\prime)^{-1},1^{(n-3)}),k(t),f_2(g_2(k(r),k(s^\prime),1^{(n-2)}),$

$\hspace{0.7cm}-g_2(k(r^\prime),k(s),1^{(n-2)})),0^{(m-2)}),1^{(n-3)})$

$\hspace{0.3cm} = g_2(g_2(k(t)^{-1},k(t),1^{(n-2)}), g_2(k(s)^{-1},k(s^\prime)^{-1},1^{(n-2)}),f_2(g_2(k(r),k(s^\prime),1^{(n-2)}),$

$\hspace{0.7cm}-g_2(k(r^\prime),k(s),1^{(n-2)})),0^{(m-2)}),1^{(n-3)})$

$\hspace{0.3cm} = g_2(1, g_2(k(s)^{-1},k(s^\prime)^{-1},1^{(n-2)}),f_2(g_2(k(r),k(s^\prime),1^{(n-2)}),$

$\hspace{0.7cm}-g_2(k(r^\prime),k(s),1^{(n-2)})),0^{(m-2)}),1^{(n-3)})$

$\hspace{0.3cm} = f_2 (g_2(g_2(k(s)^{-1},k(s^\prime)^{-1},1^{(n-2)}),g_2(k(r),k(s^\prime),1^{(n-2)}),1^{(n-2)})$

$\hspace{0.7cm}-g_2(g_2(k(s)^{-1},k(s^\prime)^{-1},1^{(n-2)}),g_2(k(r^\prime),k(s),1^{(n-2)}),1^{(n-2)}),0^{(m-2)})$

$\hspace{0.3cm} = f_2 (g_2(g_2(k(s^\prime)^{-1},k(s^\prime),1^{(n-2)}),g_2(k(r),k(s)^{-1},1^{(n-2)}),1^{(n-2)})$

$\hspace{0.7cm}-g_2(g_2(k(s),k(s)^{-1},1^{(n-2)}),g_2(k(r^\prime),k(s^\prime)^{-1},1^{(n-2)}),1^{(n-2)}),0^{(m-2)})$

$\hspace{0.3cm} = f_2 (g_2(k(r),k(s)^{-1},1^{(n-2)})-g_2(k(r^\prime),k(s^\prime)^{-1},1^{(n-2)}),0^{(m-2)})$

$\hspace{0.3cm} =f_2(h(\frac{r}{s}),h(\frac{r^\prime}{s^\prime}),0^{(m-2)}).$\\
Then we coclude that $h(\frac{r}{s})=h(\frac{r^\prime}{s^\prime})$.\\
 We must show that the mapping $h$ is an homomorphism. Let $r_1^m \in R_1$ and $s_1^m \in S$. Then we get
 
 $h(F(\frac{r_1}{s_1},...,\frac{r_m}{s_m})$
 
 $=h(\frac{f_1(g_1(r_1,s_2^m,1^{(n-m)}),g_1(s_1,r_2,s_3^m,1^{(n-m)}),...,g_1(s_1^{m-1},r_m,1^{(n-m)})}{g_1(s_1^m,1^{(n-m)})})$

 $=g_2(k(f_1(g_1(r_1,s_2^m,1^{(n-m)}),...,g_1(s_1^{m-1},r_m,1^{(n-m)})),k(g_1(s_1^m,1^{(n-m)}))^{-1},1^{(n-2)})$

 $=g_2(f_2(k(g_1(r_1,s_2^m,1^{(n-m)}),...,k(g_1(s_1^{m-1},r_m,1^{(n-m)})),k(g_1(s_1^m,1^{(n-m)}))^{-1},1^{(n-2)})$
 
 $=g_2(f_2(g_2((k(r_1),k(s_2),...,k(s_m),k(1)^{(n-m)}),...,g_2(k(s_1),...,k(s_{m-1}),k(r_m),k(1)^{(n-m)})),$
 
 $\hspace{0.4cm}g_2(k(s_1)^{-1},...,k(s_m)^{-1},k(1)^{(n-m)})),1^{(n-2)})$
 
 $=f_2(g_2(g_2(k(s_1)^{-1},...,k(s_m)^{-1},k(1)^{(n-m)}),g_2(k(r_1),k(s_2),...,k(s_m),k(1)^{(n-m}),1^{(n-2)})),$

 $\hspace{0.4cm}...,$
 
 $\hspace{0.4cm}g_2(g_2(k(s_1)^{-1},...,k(s_m)^{-1},k(1)^{(n-m)}),g_2(k(s_1),...,k(s_{m-1}),k(r_m),k(1)^{(n-m)})),1^{(n-2)}))$

$=f_2(g_2(k(r_1),k(s_1)^{-1},1^{(n-2)}),...,g_2(k(r_m),k(s_m)^{-1},1^{(n-2)}))$

$=f_2(h(\frac{r_1}{s_1}),...,h(\frac{r_m}{s_m}))$.\\
Also,  we have

$h(G(\frac{r_1}{s_1},...,\frac{r_n}{s_n})=h(\frac{g_1(r_1^n)}{g_1(s_1^n)})$

$\hspace{2.3cm}=g_2(k(g_1(r_1^n)),k(g_1(s_1^n))^{-1},1^{(n-2)})$
 
$\hspace{2.3cm}=g_2(g_2(k(r_1),...,k(r_n)),g_2(k(s_1)^{-1},...,k(s_n)^{-1}),1^{(n-2)})$

 $\hspace{2.3cm}=g_2(g_2(k(r_1),k(s_1)^{-1}),1^{(n-2)}),...,g_2(k(r_n),k(s_n)^{-1}),1^{(n-2)}))$

 $\hspace{2.3cm}=g_2(h(\frac{r_1}{s_1}),...,h(\frac{r_n}{s_n}))$\\
 for $r_1^n \in R_1$ and $s_1^n \in S$. Consequently, $h$ is a homomorphism. Now, suppose that $h^\prime$ is  another homomorphism frome $S^{-1}R_1$ to $R_2$ with $h^\prime o\phi=k$.  Then  we obtain

 $h(\frac{r}{s})=h(G(\frac{r}{1},\frac{1}{s},\frac{1}{1}^{(n-2)}))$

 $\hspace{0.8cm}=g_2(h(\frac{r}{1}),h(\frac{1}{s}),h(\frac{1}{1})^{(n-2)})$

  $\hspace{0.8cm}=g_2(h(\phi(r)),h(\phi(s)^{-1}),1^{(n-2)})$
  
  $\hspace{0.8cm}=g_2(h(\phi(r)),(h(\phi(s))^{-1},1^{(n-2)})$

 $\hspace{0.8cm}=g_2(k(r),k(s)^{-1},1^{(n-2)})$
 
 $\hspace{0.8cm}=g_2(h^\prime(\phi(r)),(h^\prime(\phi(s))^{-1},1^{(n-2)})$

 $\hspace{0.8cm}=g_2(h^\prime(\frac{r}{1})),(h^\prime(\frac{s}{1}))^{-1},1^{(n-2)})$

 $\hspace{0.8cm}=g_2(h^\prime(\frac{r}{1})),h^\prime(\frac{1}{s}),1^{(n-2)})$

 $\hspace{0.8cm}=h^\prime(G(\frac{r}{1},\frac{1}{s},\frac{1}{1}^{(n-2)}))$

 $\hspace{0.8cm}=h^\prime(\frac{r}{s}).$\\
 for every $\frac{r}{s} \in S^{-1}R$. It implies that the homomorphism $h$ is unique. Thus the proof is completed.
 \end{proof}
 \begin{corollary} \label{1361}
 Let $(R_1,f_1,h_1)$ and $(R_2,f_2,g_2)$ be two Krasner $(m,n)$-hyperrings and $S$ be an $n$-ary multiplicative subset of $R_1$ with $1 \in S$. Let $k:R_1\longrightarrow R_2$ be a homomorphism such that 
 
 $i)$ $k(s)$ is an invertible element of $R_2$ for each $s \in S$. 
 
 $ii)$  $k(r_1)=0$ for  $r_1 \in R_1$ implies that $g_1(t,r_1,1^{(n-2)})=0$, for some $t \in S$.
 
 $iii)$ for each $r_2 \in R_2$, $r_2=g_2(k(r_1),k(s)^{-1},1^{(n-2)})$ where $r_1 \in R_1$ and $s \in S$.\\
 Then there exists an unique isomomorphism $h:S^{-1}R_1 \longrightarrow R_2$ such that $ho\phi=k$.
 \end{corollary}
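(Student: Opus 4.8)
The plan is to observe that Theorem \ref{14} already supplies almost everything. Hypothesis $(i)$ is exactly the hypothesis of that theorem, so it yields a \emph{unique} homomorphism $h\colon S^{-1}R_1\longrightarrow R_2$ with $h\circ\phi=k$, namely $h(\frac{r}{s})=g_2(k(r),k(s)^{-1},1^{(n-2)})$. Thus the only new content is to promote $h$ to an isomorphism, i.e.\ to check that it is bijective; the uniqueness asserted in the corollary is then automatic, since any isomorphism $S^{-1}R_1\to R_2$ commuting with $\phi$ is in particular a homomorphism commuting with $\phi$, hence equals $h$ by Theorem \ref{14}.

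Surjectivity is immediate from $(iii)$: given $r_2\in R_2$, pick $r_1\in R_1$ and $s\in S$ with $r_2=g_2(k(r_1),k(s)^{-1},1^{(n-2)})$, and then $r_2=h(\frac{r_1}{s})$.

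The real work is injectivity, for which I would show directly that $h(\frac{r}{s})=h(\frac{r'}{s'})$ forces $(r,s)\sim(r',s')$. Starting from $g_2(k(r),k(s)^{-1},1^{(n-2)})=g_2(k(r'),k(s')^{-1},1^{(n-2)})$ and applying $g_2(-,g_2(k(s),k(s'),1^{(n-2)}),1^{(n-2)})$ to both sides, invertibility of $k(s),k(s')$ from $(i)$ clears the denominators and gives $g_2(k(r),k(s'),1^{(n-2)})=g_2(k(r'),k(s),1^{(n-2)})$. Since $k$ is a homomorphism with $k(1_{R_1})=1_{R_2}$ (note $1\in S$, so $k(1)$ is invertible), this reads $k(g_1(r,s',1^{(n-2)}))=k(g_1(r',s,1^{(n-2)}))$. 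As $(R_2,f_2)$ is a canonical $m$-ary hypergroup, the difference of two equal elements contains the neutral element, so $0\in f_2(k(g_1(r,s',1^{(n-2)})),-k(g_1(r',s,1^{(n-2)})),0^{(m-2)})=k(f_1(g_1(r,s',1^{(n-2)}),-g_1(r',s,1^{(n-2)}),0^{(m-2)}))$; hence some $w\in f_1(g_1(r,s',1^{(n-2)}),-g_1(r',s,1^{(n-2)}),0^{(m-2)})$ satisfies $k(w)=0$.

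Now hypothesis $(ii)$ yields $t\in S$ with $g_1(t,w,1^{(n-2)})=0$. Applying $g_1(t,-,1^{(n-2)})$ to the membership $w\in f_1(\cdots)$ and using distributivity of $g_1$ over $f_1$ together with $g_1(t,0,1^{(n-2)})=0$ and $g_1(t,-x,1^{(n-2)})=-g_1(t,x,1^{(n-2)})$, I obtain $0=g_1(t,w,1^{(n-2)})\in g_1(t,f_1(g_1(r,s',1^{(n-2)}),-g_1(r',s,1^{(n-2)}),0^{(m-2)}),1^{(n-2)})$, which is precisely the defining relation $(r,s)\sim(r',s')$; thus $\frac{r}{s}=\frac{r'}{s'}$ and $h$ is injective. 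The main obstacle is exactly this last transfer: one must carry the scalar equality in $R_2$ back across $k$ into $R_1$ via $(ii)$, carefully distinguishing single elements from subsets in the $f_1$-expression and re-scaling by $t$ so that the outcome lands in the membership form defining $\sim$ (the case $r'=0$ being Lemma \ref{11}$(2)$). Once injectivity is established, $h$ is a bijective homomorphism, hence an isomorphism, and the proof is complete.
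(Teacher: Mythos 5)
Your proposal is correct and follows exactly the route the paper intends: the paper's own ``proof'' of this corollary is a one-line deferral to the argument of Theorem~\ref{14}, and what you supply --- existence and uniqueness of the homomorphism $h$ from Theorem~\ref{14}, surjectivity from $(iii)$, and injectivity by clearing denominators with $(i)$ and pulling the resulting equality back through $k$ via $(ii)$ into the defining membership relation of $\sim$ --- is precisely the intended completion, worked out in more detail than the paper gives. The only implicit assumptions, that $k(1)=1$ and $k(0)=0$, are ones the paper itself uses freely throughout the proof of Theorem~\ref{14}, so they do not constitute a gap relative to the paper.
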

 \begin{proof}
 By using an argument similar to that in the proof of Theorem \ref{14}, one can easily complete the proof.
 \end{proof}
  \begin{theorem} \label{1362}
 If $R$ is  an $n$-ary hyperintegral domain, then $S^{-1}R$ is an $n$-ary hyperintegral domain.
 \end{theorem}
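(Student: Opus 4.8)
The plan is to verify the two defining conditions of an $n$-ary hyperintegral domain for $S^{-1}R$: commutativity and the zero-product property. Commutativity is immediate, since for any permutation of the inputs the value $G(\frac{r_1}{s_1},\dots,\frac{r_n}{s_n})=\frac{g(r_1^n)}{g(s_1^n)}$ is unchanged, because $g$ is commutative on $R$ and hence both the numerator $g(r_1^n)$ and the denominator $g(s_1^n)$ are permutation-invariant. Thus the genuine content lies in the zero-product condition.

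To establish it, I would start from the assumption $G(\frac{r_1}{s_1},\dots,\frac{r_n}{s_n})=0_{S^{-1}R}$, which by the definition of $G$ reads $\frac{g(r_1^n)}{g(s_1^n)}=0_{S^{-1}R}$. Applying Lemma \ref{11}$(2)$ to the numerator $g(r_1^n)$, there exists some $t\in S$ with $g(t,g(r_1^n),1^{(n-2)})=0$. The key maneuver is then to reorganize this single product: since $g$ is associative and commutative with scalar identity $1$, the value $g(t,g(r_1^n),1^{(n-2)})$ depends only on the multiset of its non-identity factors $\{t,r_1,\dots,r_n\}$, so I may regroup it as $g(g(t,r_1,1^{(n-2)}),r_2,r_3,\dots,r_n)=0$, where $g(t,r_1,1^{(n-2)})$ is a single element of $R$ and the outer $g$ now genuinely carries $n$ arguments.

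At this point the hypothesis that $R$ is an $n$-ary hyperintegral domain applies directly to the $n$-ary product on the left: one of its $n$ factors must vanish, i.e. either $g(t,r_1,1^{(n-2)})=0$ or $r_j=0$ for some $2\le j\le n$. In the first case, since $t\in S$, Lemma \ref{11}$(2)$ yields $\frac{r_1}{s_1}=0_{S^{-1}R}$; in the second case Lemma \ref{11}$(1)$ gives $\frac{r_j}{s_j}=\frac{0}{s_j}=0_{S^{-1}R}$. Either way some $\frac{r_i}{s_i}$ is zero, which is precisely the zero-product property for $S^{-1}R$, so $S^{-1}R$ is an $n$-ary hyperintegral domain.

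I expect the only delicate point to be the regrouping step: one must check that associativity and commutativity of $g$ (together with $1$ being a scalar identity that absorbs harmlessly into any slot) legitimately allow passage from the nested expression $g(t,g(r_1^n),1^{(n-2)})$ to the flat $n$-ary product $g(g(t,r_1,1^{(n-2)}),r_2,\dots,r_n)$ without altering its value. Once this bookkeeping is in place the integral-domain hypothesis plugs in at once; notably, no issue about whether $t=0$ ever arises, since each branch of the case split is resolved by a clause of Lemma \ref{11} rather than by cancelling $t$.
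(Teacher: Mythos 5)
Your proof is correct and follows the same basic route as the paper's: reduce to $\frac{g(r_1^n)}{g(s_1^n)}=0_{S^{-1}R}$, invoke Lemma \ref{11}(2) to produce $t\in S$ with $g(t,g(r_1^n),1^{(n-2)})=0$, and then feed this into the hyperintegral-domain hypothesis on $R$. The one genuine difference is how that hypothesis is applied. The paper applies it twice in succession: first to the outer product $g(t,g(r_1^n),1^{(n-2)})=0$ to conclude $g(r_1^n)=0$ (which requires asserting $t\neq 0$, a step the paper states without justification, since nothing in the hypotheses forbids $0\in S$), and then a second time to $g(r_1^n)=0$ to extract some $r_i=0$. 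You instead regroup the single product as $g(g(t,r_1,1^{(n-2)}),r_2,\dots,r_n)=0$ and apply the hypothesis once: either $g(t,r_1,1^{(n-2)})=0$, which is exactly the hypothesis of Lemma \ref{11}(2) and gives $\frac{r_1}{s_1}=0_{S^{-1}R}$ with no cancellation of $t$, or some $r_j=0$ and Lemma \ref{11}(1) finishes. Your version is cleaner and closes the small gap in the paper's argument; the only thing you owe the reader is the bookkeeping you already flagged, namely that associativity and commutativity of $g$ make the derived $(2n-1)$-ary product invariant under permutation of its arguments, so the two nested expressions agree. That is routine for a commutative $n$-ary semigroup and is used implicitly throughout the paper as well.
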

\begin{proof}
Let $G(\frac{r_1}{s_1},...,\frac{r_1}{s_1})=0_{S^{-1}R}$ for $r_1^n \in R$ and $s_1^n \in S$. Thus $\frac{g(a_1^n)}{g(s_1^n)}=0_{S^{-1}R}$. By Lemma \ref{11} (2), we have $g(t,g(a_1^n),1^{(n-2)})=0$ for some $t \in S$. Since $R$ is  an $n$-ary hyperintegral domain and $t \neq 0$,  we have $g(a_1^n)=0$ which implies $a_1=0$ or $a_2=0$ or ... or $a_n=0$. Hence we get $\frac{a_1}{s_1}=0_{S^{-1}R}$ or $\frac{a_2}{s_2}=0_{S^{-1}R}$ or ... or $\frac{a_n}{s_n}=0_{S^{-1}R}$. Thus $S^{-1}R$ is an $n$-ary hyperintegral domain.
\end{proof} 
 \begin{theorem} \label{15}
 Let $R$ be an $n$-ary hyperintegral domain and $S=R- \{0\}$. Then  each nonzero element of  $S^{-1}R$ is invertible.
 \end{theorem}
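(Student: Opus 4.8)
The plan is to exhibit an explicit inverse rather than argue abstractly. Recall that $\tfrac{r}{s}\in S^{-1}R$ is invertible means there is an element $\tfrac{a}{b}$ with $1_{S^{-1}R}=G\bigl(\tfrac{r}{s},\tfrac{a}{b},\bigl(\tfrac{1}{1}\bigr)^{(n-2)}\bigr)$. The natural candidate is $\tfrac{a}{b}=\tfrac{s}{r}$, so the whole argument reduces to two points: that $\tfrac{s}{r}$ is a legitimate element of $S^{-1}R$, and that its $G$-product with $\tfrac{r}{s}$ collapses to $1_{S^{-1}R}$.

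First I would observe that a nonzero fraction must have nonzero numerator. Indeed, if $\tfrac{r}{s}\neq 0_{S^{-1}R}$, then by Lemma~\ref{11}(2) there is no $t\in S$ with $g(t,r,1^{(n-2)})=0$. But every $t\in S=R-\{0\}$ is nonzero and $R$ is an $n$-ary hyperintegral domain, so the relation $g(t,r,1^{(n-2)})=0$ would force one of its entries to vanish; as $t$ and the padding $1$'s are all nonzero, this could only be $r=0$. Hence $r\neq 0$, i.e. $r\in S$, and therefore $\tfrac{s}{r}$ is a well-defined element of $S^{-1}R$. (It is also worth recording, to justify that the construction applies at all, that $S=R-\{0\}$ really is $n$-ary multiplicative: $g(s_1^n)=0$ with each $s_i\neq 0$ is impossible in a hyperintegral domain.)

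Next I would compute directly from the definition of $G$:
\[
G\Bigl(\tfrac{r}{s},\tfrac{s}{r},\bigl(\tfrac{1}{1}\bigr)^{(n-2)}\Bigr)
=\frac{g(r,s,1^{(n-2)})}{g(s,r,1^{(n-2)})}.
\]
By commutativity of $g$ (assumed throughout the paper) the numerator and denominator coincide; call this common value $u=g(r,s,1^{(n-2)})$. Since $r$, $s$ and $1$ are all nonzero, the hyperintegral-domain hypothesis gives $u\neq 0$, so $u\in S$. Then Lemma~\ref{11}(3) yields $\tfrac{u}{u}=1_{S^{-1}R}$, whence $\tfrac{r}{s}$ is invertible with inverse $\tfrac{s}{r}$.

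The argument is short because the real content is already packaged in Lemma~\ref{11}. The only genuine step is the reduction ``$\tfrac{r}{s}\neq 0_{S^{-1}R}\Rightarrow r\neq 0$'', and this is precisely where the hyperintegral-domain assumption is used, in fact twice: once to rule out $g(t,r,1^{(n-2)})=0$, and once to guarantee $u\in S$ so that Lemma~\ref{11}(3) applies. I do not expect any real obstacle beyond bookkeeping with the padded $1$'s and confirming that commutativity of $g$ is available; no hard estimate or delicate construction is required.
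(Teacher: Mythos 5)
Your proposal is correct and follows essentially the same route as the paper: exhibit $\tfrac{s}{r}$ as the explicit inverse, compute $G\bigl(\tfrac{r}{s},\tfrac{s}{r},\bigl(\tfrac{1}{1}\bigr)^{(n-2)}\bigr)=\tfrac{g(r,s,1^{(n-2)})}{g(s,r,1^{(n-2)})}$, and invoke Lemma~\ref{11}(3). You merely supply details the paper leaves implicit (that $r\neq 0$ follows from $\tfrac{r}{s}\neq 0_{S^{-1}R}$, that $S=R-\{0\}$ is $n$-ary multiplicative, and that $g(r,s,1^{(n-2)})\in S$), which is a welcome tightening rather than a different argument.
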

 \begin{proof}
 Let $\frac{r}{s}$ be an nonzero element of $S^{-1}R$. Since $r \neq 0$, then $r \in S$ and so $\frac{s}{r} \in S^{-1}R$. Thus $G(\frac{r}{s},\frac{s}{r},{\frac{1}{1}}^{(n-2)})=\frac{g(r,s,1^{(n-2)})}{g(s,r,1^{(n-2)})}=\frac{1}{1}=1_{S^{-1}R}$, by Lemma \ref{11} (3).
 \end{proof}

 \section{hyperideals in Krasner $(m,n)$-hyperring of fractions}
Let $I$ be a hyperideal of Krasner $(m,n)$-hyperring $R$ and  $S$ be an $n$-ary multiplicative subset of $R$ with $1 \in S$, then we can define that $S^{-1}I=\{\frac{a}{s}\ \vert \ a \in I, s \in S\},$ which is a hyperideal of $S^{-1}R$.
 \begin{theorem} \label{21}
 Let $R$ be a Krasner $(m,n)$-hyperring and $S$ be an $n$-ary multiplicative subset of $R$ with $1 \in S$. Let $I$ be a hyperideal of $R$. Then $I \cap S \neq \varnothing$ if and only if  $S^{-1}I=S^{-1}R$.
 \end{theorem}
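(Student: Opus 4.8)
Let $R$ be a Krasner $(m,n)$-hyperring and $S$ be an $n$-ary multiplicative subset of $R$ with $1 \in S$. Let $I$ be a hyperideal of $R$. Then $I \cap S \neq \varnothing$ if and only if $S^{-1}I = S^{-1}R$.

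The plan is to prove the two implications separately.

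**Forward direction:** Suppose $I \cap S \neq \varnothing$, so pick $s_0 \in I \cap S$. I want to show $S^{-1}I = S^{-1}R$. Since $S^{-1}I \subseteq S^{-1}R$ always holds, it suffices to show $1_{S^{-1}R} \in S^{-1}I$, because $S^{-1}I$ is a hyperideal and hence contains $G(x, 1_{S^{-1}R}, \ldots)$ for any $x$, giving all of $S^{-1}R$. The key observation is that $\frac{s_0}{s_0} = \frac{1}{1} = 1_{S^{-1}R}$ by Lemma 3.4(3). Since $s_0 \in I$ and $s_0 \in S$, the fraction $\frac{s_0}{s_0}$ is manifestly an element of $S^{-1}I = \{\frac{a}{s} \mid a \in I, s \in S\}$. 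Thus $1_{S^{-1}R} \in S^{-1}I$, and for any $\frac{r}{s} \in S^{-1}R$ we get $\frac{r}{s} = G(\frac{r}{s}, 1_{S^{-1}R}^{(n-1)}) \in S^{-1}I$ since $S^{-1}I$ absorbs $G$-multiplication. Hence $S^{-1}R \subseteq S^{-1}I$, giving equality.

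**Reverse direction:** Suppose $S^{-1}I = S^{-1}R$. Then in particular $1_{S^{-1}R} = \frac{1}{1} \in S^{-1}I$, so there exist $a \in I$ and $s \in S$ with $\frac{a}{s} = \frac{1}{1}$. Unwinding the equivalence relation $\sim$, this means there is some $t \in S$ with $0 \in g\bigl(t, f(g(a, 1^{(n-1)}), -g(1, s, 1^{(n-2)}), 0^{(m-2)}), 1^{(n-2)}\bigr)$, which after simplification yields $g(t, s, 1^{(n-2)}) \in f(g(t, a, 1^{(n-2)}), 0^{(m-1)})$, i.e. $g(t, a, 1^{(n-2)})$ and $g(t, s, 1^{(n-2)})$ are linked through the $f$-relation. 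The crucial point is that $g(t, s, 1^{(n-2)}) \in S$ (as $S$ is $n$-ary multiplicative and $1 \in S$), while $g(t, a, 1^{(n-2)}) \in I$ (as $I$ is a hyperideal absorbing $g$). Since these two elements coincide in the relevant sense forced by the relation, I would extract a common element lying in $I \cap S$, showing this intersection is nonempty.

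The main obstacle is the reverse direction: carefully unwinding the definition of $\sim$ for $\frac{a}{s} = \frac{1}{1}$ and correctly manipulating the nested $f$ and $g$ expressions to land an honest element in both $I$ and $S$ simultaneously. The subtlety is that the relation only says $0$ lies in a certain hyperoperation output, so one must use the canonical $m$-ary hypergroup structure (Definition 2.2) and the absorbing property of $0$ to deduce that $g(t, a, 1^{(n-2)})$, which lies in $I$, actually equals the element $g(t, s, 1^{(n-2)}) \in S$; this identification is what produces an element of $I \cap S$. The forward direction, by contrast, is essentially immediate once one invokes Lemma 3.4(3).
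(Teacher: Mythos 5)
Your proposal is correct and follows essentially the same route as the paper: the forward direction uses $\frac{s_0}{s_0}=\frac{1}{1}$ together with absorption under $G$, and the reverse direction unwinds $\frac{a}{s}=\frac{1}{1}$ to the identity $g(t,a,1^{(n-2)})=g(t,s,1^{(n-2)})$, an element of $I\cap S$. The only nitpick is terminological: the step $f(x,0^{(m-1)})=\{x\}$ uses $0$ as the scalar identity of the canonical $m$-ary hypergroup $(R,f)$, not its absorbing property (which concerns $g$).
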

 \begin{proof}
 $(\Longrightarrow):$ Let $a \in I \cap S$. Then $\frac{1}{1}=\frac{a}{a} \in S^{-1}I$. Since $I$ is  a hyperideal of $R$, we have $G(\frac{1}{1},\frac{r}{s},\frac{1}{1}^{(n-2)}) \in S^{-1}I$ for all $\frac{r}{s} \in S^{-1}R$. Since $G(\frac{1}{1},\frac{r}{s},\frac{1}{1}^{(n-2)})=\frac{g(1,r,1^{(n-2)})}{g(1,s,1^{(n-2)})}=\frac{r}{s}$, then $\frac{r}{s} \in S^{-1}I.$ Thus $S^{-1}I=S^{-1}R$.\\
 $(\Longleftarrow):$ By the homomorphism $\phi :R \longrightarrow S^{-1}R$, it implies that $\phi(1)=\frac{1}{1}$. Since $S^{-1}I=S^{-1}R$ and $\phi(1) \in S^{-1}R$, then $\phi(1) \in S^{-1}I$. Hence, there exist $a \in I$, $s \in S$ such that $\frac{1}{1}=\phi(1)=\frac{a}{s}$. So, there exists $t \in S$ such that 
 
 $0 \in g(t,f(g(a,1,1^{(n-2)}),-g(1,s,1^{(n-2)}),0^{(m-2)}),1^{(n-2)})$
 
 $\hspace{0.3cm}=g(t,f(g(a,1^{(n-1)}),-g(s,1^{(n-1)}),0^{(m-2)}),1^{(n-2)})$   
 
 $\hspace{0.3cm}=f(g(t,g(a,1^{(n-1)}),1^{(n-2)}),g(t,-g(s,1^{(n-1)}),1^{(n-2)}),0^{(m-2)})$

  $\hspace{0.3cm}=f(g(t,a,1^{(n-2)}),-g(t,s,1^{(n-2)}),0^{(m-2)})$.\\Since $g(t,a,1^{(n-2)}) \in I$, then $g(t,s,1^{(n-2)}) \in I$. Also, since $S$ is an $n$-ary multiplicative subset of $R$, then  $g(t,s,1^{(n-2)}) \in S$. Consequently, $I \cap S \neq \varnothing$.  
 \end{proof}
 If $(a,s) \in S^{-1}I$ we don$^,$t get necessarily $a \in I$, maybe $(a,s)=(a^\prime,s)$ such that $a^\prime \in I$ but $a \notin I$.
 \begin{theorem}
Let $R$ be a Krasner $(m,n)$-hyperring and $S$ be an $n$-ary multiplicative subset of $R$ with $1 \in S$. Then every hyperideal of $S^{-1}R$ is an extended hyperideal. 
 \end{theorem}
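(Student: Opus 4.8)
The plan is to realize an arbitrary hyperideal $J$ of $S^{-1}R$ as the extension $S^{-1}I$ of a hyperideal $I$ of $R$. The natural candidate is the contraction of $J$ along the natural map $\phi$ of Theorem \ref{12}, namely $I=\phi^{-1}(J)=\{r \in R \mid \frac{r}{1} \in J\}$. So first I would fix $J$, define $I$ in this way, and then carry out two tasks: verify that $I$ is a hyperideal of $R$, and establish the equality $S^{-1}I=J$.

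For the first task I would exploit that $\phi$ is a homomorphism together with the assumption that $J$ is a hyperideal of $S^{-1}R$. If $x_1^m \in I$, then $\phi(f(x_1^m))=F(\phi(x_1),\dots,\phi(x_m)) \subseteq J$ because each $\phi(x_i) \in J$ and $J$ is an $m$-ary subhypergroup; hence $f(x_1^m) \subseteq I$. Since $R$ is canonical and $\phi$ preserves inverses, $I$ is closed under the inverse map, so the reproduction equations $b \in f(b_1^{i-1},x_i,b_{i+1}^m)$ are solvable inside $I$, giving that $(I,f)$ is an $m$-ary subhypergroup. For the absorption condition, if $a \in I$ and $x_1^n \in R$, then $\phi(g(x_1^{i-1},a,x_{i+1}^n))=G(\phi(x_1),\dots,\phi(a),\dots,\phi(x_n)) \subseteq J$ since $\phi(a) \in J$ and $J$ absorbs products; thus $g(x_1^{i-1},a,x_{i+1}^n) \subseteq I$. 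Hence $I$ is a hyperideal of $R$.

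For the equality $S^{-1}I=J$ I would argue by double inclusion. For $S^{-1}I \subseteq J$, take $\frac{a}{s}$ with $a \in I$ and $s \in S$; by Theorem \ref{13}(3) we have $\frac{a}{s}=G(\phi(a),\phi(s)^{-1},{\frac{1}{1}}^{(n-2)})$, and since $\phi(a) \in J$ while $J$ absorbs products, $\frac{a}{s} \in J$. For the reverse inclusion $J \subseteq S^{-1}I$, take $\frac{r}{s} \in J$; the key computation is the identity $\phi(r)=\frac{r}{1}=G(\frac{r}{s},\phi(s),{\frac{1}{1}}^{(n-2)})$, which reduces to checking $\frac{g(r,s,1^{(n-2)})}{g(s,1^{(n-1)})}=\frac{r}{1}$ directly from the defining relation $\sim$: the two numerators coincide after expanding, so $0$ lies in the relevant hyperproduct with witness $t=1$. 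Because $\frac{r}{s} \in J$ and $J$ absorbs products, this yields $\phi(r) \in J$, i.e. $r \in I$, and therefore $\frac{r}{s} \in S^{-1}I$. Combining the two inclusions gives $J=S^{-1}I$, so $J$ is extended.

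I expect the main obstacle to be the reverse inclusion, specifically establishing the identity $\phi(r)=G(\frac{r}{s},\phi(s),{\frac{1}{1}}^{(n-2)})$ cleanly from the equivalence relation, and more generally keeping the $1^{(\cdot)}$ bookkeeping in the $n$-ary products correct throughout. The verification that the contraction $I$ is a hyperideal is conceptually routine, since preimages of hyperideals under a homomorphism behave as expected, but it still requires some care with the reproduction axiom and with inverse-closure in the canonical $m$-ary hypergroup $(R,f)$.
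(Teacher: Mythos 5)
Your proposal is correct, but it takes a genuinely different route from the paper's. The paper works with $B=\{r \in R \mid \exists s \in S,\ \frac{r}{s} \in J\}$ and proves $S^{-1}B=J$ by a direct computation with the equivalence relation: given $\frac{b}{s}\in S^{-1}B$ it picks a representative $\frac{b'}{s'}$ with $b'\in B$, extracts a witness $t\in S$, and manipulates elements $t'=g(t,s',1^{(n-2)})$ and $t''$ until it can rebuild $\frac{b}{s}$ as a $G$-product of elements of $J$ (this is forced by the representative-dependence issue the paper itself flags just before the theorem). You instead take the contraction $I=\phi^{-1}(J)$ along the natural map and lean on the decomposition $\frac{a}{s}=G(\phi(a),\phi(s)^{-1},{\frac{1}{1}}^{(n-2)})$ of Theorem \ref{13}(3), which reduces both inclusions to one-line absorption arguments: $\phi(a)\in J$ gives $\frac{a}{s}\in J$, and $\frac{r}{s}\in J$ gives $\phi(r)=G(\frac{r}{s},\phi(s),{\frac{1}{1}}^{(n-2)})\in J$, the latter identity being immediate from $0\in f(x,-x,0^{(m-2)})$ in the canonical hypergroup with witness $t=1$. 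The two candidate hyperideals coincide (if $\frac{r}{s}\in J$ for some $s$ then $\frac{r}{1}\in J$ by the same absorption), but your characterization sidesteps the representative bookkeeping entirely and mirrors the classical commutative-algebra proof; what it costs you is the (routine but not free) verification that the contraction of a hyperideal under the homomorphism $\phi$ is a hyperideal, including the reproduction axiom via inverse-closure, which the paper dismisses for its $B$ with ``easily, it is proved.'' Both arguments are sound; yours is shorter and more structural.
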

 \begin{proof}
 Suppose that $J$ is a hyperideal of $S^{-1}R$. Put $B=\{r \in R \ \vert \ \exists s \in S; \frac{r}{s} \in J\}$. Easily, it is proved that $B$ is a hyperideal of $R$. We show that $B^e=S^{-1}B=J$. Let $\frac{r}{s} \in J$. Then $r \in B$ and so $\frac{r}{s} \in S^{-1}B$ which means $J \subseteq S^{-1}B$. Now, assume that $\frac{b}{s} \in S^{-1}B$. Then there exist $b^\prime \in B$ and $s^\prime \in S$ such that $\frac{b}{s}=\frac{b^\prime}{s^\prime}$. It means there exists $t \in S$ such that
 
 $0 \in g(t,f(g(b,s^\prime,1^{(n-2)}),-g(b^\prime,s,1^{(n-2)}),0^{(m-2)}),1^{(n-2)})$
 
 $\hspace{0.3cm}=f(g(t,b,s^\prime,1^{(n-3)}),-g(t,b^\prime,s,1^{(n-3)}),0^{(m-2)}),1^{(n-2)})$.\\
 Since $g(t,b^\prime,s,1^{(n-3)}) \in B$, then $g(g(t,s^\prime,1^{(n-2}),b,1^{(n-2)})=g(t,b,s^\prime,1^{(n-3)}) \in B$. Put $t^\prime=g(t,s^\prime,1^{(n-2})$. Therefore we have $g({t^\prime}^{(m-1)},b,1^{(n-m)}) \in B$. Hence there exists $t^{\prime \prime} \in S$ such that $\frac{g({t^\prime}^{(m-1)},b,1^{(n-m)})}{t^{\prime \prime}} \in J$ and so $\frac{g({t^\prime}^{(m-1)},b,1^{(n-m)})}{g({t^{\prime \prime}}^{(m-1)},1^{(n-m+1)})} \in J$.  Then  we have
 
 $G(\frac{g({t^{\prime \prime}}^{(m-1)},1^{(n-m+1)})}{g(t^\prime,s,1^{(n-2)})},\frac{g({t^\prime}^{(m-1)},b,1^{(n-m)})}{g({t^{\prime \prime}}^{(m-1)},1^{(n-m+1)})})=\frac{g(g(t^{\prime \prime},t^\prime,1^{(n-2)})^{(m-1)},b,1^{(n-m)})}{g(g(t^{\prime \prime},t^\prime,1^{(n-2)})^{(m-1)},s,1^{(n-m)})}=\frac{b}{s} \in J$.\\
 This means $S^{-1}B \subseteq J $. Consequently,  $S^{-1}B =J $.
 \end{proof}
 Let $R$ be a Krasner (m, n)-hyperring. Then the hyperideal $M$ of $R$ is said to be
maximal if for every hyperideal $I$ of $R$, $M \subseteq  I \subseteq  R$ implies that $I = M$ or $I = R$ \cite{amer}.
 \begin{lem}
 Let $R$ be a Krasner $(m,n)$-hyperring such that $M$ is a hyperideal of $R$. If each $x \in R-M$ is invertible, then $M$ is a maximal hyperideal of $R$.
 \end{lem}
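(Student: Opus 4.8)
The plan is to verify the defining condition of maximality directly: I take an arbitrary hyperideal $I$ of $R$ with $M \subseteq I \subseteq R$ and $I \neq M$, and I show $I = R$. Since $I \neq M$ and $M \subseteq I$, I can choose an element $x \in I$ with $x \notin M$. The whole argument then hinges on exploiting the hypothesis that every element outside $M$ is invertible, together with the absorption property of the hyperideal $I$.

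First I would invoke invertibility. Because $x \in R - M$, by Definition~2.9 there exists $y \in R$ with $1 = g(x, y, 1^{(n-2)})$. The crucial observation is that $g$ is an $n$-ary \emph{operation} (not a hyperoperation), so $g(x, y, 1^{(n-2)})$ is a single element of $R$, namely $1$. Now I apply the hyperideal absorption axiom $g(x_1^{i-1}, I, x_{i+1}^n) \subseteq I$ with $I$ placed in the first coordinate and $x \in I$: this gives $g(x, y, 1^{(n-2)}) \in g(I, y, 1^{(n-2)}) \subseteq I$, hence $1 \in I$.

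Once $1 \in I$ is established, I would deduce $I = R$. For an arbitrary $r \in R$, the scalar identity yields $r = g(r, 1^{(n-1)}) = g(r, 1, 1^{(n-2)})$; placing $I$ in the second coordinate of the absorption axiom and using $1 \in I$ gives $r = g(r, 1, 1^{(n-2)}) \in g(r, I, 1^{(n-2)}) \subseteq I$. Thus every $r \in R$ lies in $I$, so $I = R$. Since the only hyperideal $I$ with $M \subseteq I \subseteq R$ and $I \neq M$ turns out to be $R$, the defining condition $I = M$ or $I = R$ holds, and $M$ is maximal.

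The steps here are short rather than genuinely hard; the main point requiring care is not a computation but the bookkeeping of the $n$-ary operation. I must be careful to use that $g$ is single-valued, so that $g(x, y, 1^{(n-2)})$ is the element $1$ and may legitimately be fed into the set inclusion $g(I, y, 1^{(n-2)}) \subseteq I$, and to apply the absorption axiom with $I$ occupying exactly one coordinate while the remaining coordinates are filled by elements of $R$ (here $y$ and copies of $1$). I should also note the implicit standing assumptions that $n \geq 2$ and that $R$ carries the scalar identity $1$, both of which are in force throughout the paper.
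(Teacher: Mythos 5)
Your proof is correct and is essentially the argument the paper intends: the paper dismisses the proof with ``similar to ordinary algebra,'' and your write-up is precisely that standard argument (invertibility of some $x\in I-M$ forces $1\in I$ via absorption, hence $I=R$) carried out carefully in the $n$-ary setting. No gaps; the bookkeeping with the single-valued $n$-ary operation $g$ and the scalar identity is handled correctly.
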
 
 \begin{proof}
 The proof is similar to ordinary algebra.
 \end{proof}
 \begin{theorem}
Let $R$ be a Krasner $(m,n)$-hyperring and $P$ be an $n$-ary prime hyperideal of $R$. If $S=R-P$, then $M=\{\frac{a}{s} \ \vert \ a \in P , s \in S\}$ is the only maximal hyperideal of $S^{-1}R$.
 \end{theorem}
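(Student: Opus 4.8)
The plan is to realize $M$ as the hyperideal $S^{-1}P$ of $S^{-1}R$ and to prove maximality through the criterion established in the preceding Lemma, namely that a hyperideal all of whose complementary elements are invertible is maximal. First I would check that $S = R - P$ really is an $n$-ary multiplicative subset with $1 \in S$, so that $S^{-1}R$ is defined: if $s_1, \dots, s_n \in S$ then no $s_i$ lies in $P$, so by the elementwise characterization of $n$-ary prime hyperideals $g(s_1^n) \notin P$, i.e. $g(s_1^n) \in S$, and $1 \notin P$ because $P$ is proper. By the remark opening Section~4, $M = S^{-1}P$ is then a hyperideal of $S^{-1}R$. To see that it is proper I would invoke Theorem~\ref{21}: were $S^{-1}P = S^{-1}R$, we would need $P \cap S \neq \varnothing$, which is impossible since $P \cap S = P \cap (R - P) = \varnothing$.

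The core step is to show that every $\tfrac{r}{s} \in S^{-1}R \setminus M$ is invertible. The key observation is the implication $\tfrac{r}{s} \notin M \Rightarrow r \notin P$: if $r \in P$, then the pair $(r,s)$ with $s \in S$ already exhibits $\tfrac{r}{s}$ as a member of $M$. Hence $r \in R - P = S$ and the fraction $\tfrac{s}{r}$ is a legitimate element of $S^{-1}R$. Using commutativity of $g$ together with Lemma~\ref{11}(3), I would compute
\[
G\!\left(\tfrac{r}{s}, \tfrac{s}{r}, {\tfrac{1}{1}}^{(n-2)}\right) = \frac{g(r,s,1^{(n-2)})}{g(s,r,1^{(n-2)})} = 1_{S^{-1}R},
\]
so that $\tfrac{s}{r}$ is an inverse of $\tfrac{r}{s}$. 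Since every element outside $M$ is thus invertible, the preceding Lemma immediately gives that $M$ is a maximal hyperideal of $S^{-1}R$.

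For the uniqueness assertion I would show that $M$ contains every proper hyperideal. Let $J$ be a proper hyperideal of $S^{-1}R$; if $J \not\subseteq M$, choose $\tfrac{r}{s} \in J \setminus M$, which is invertible by the previous step, so $1_{S^{-1}R} \in J$ and hence $J = S^{-1}R$ (using the scalar identity to recover any element as $G$ of itself with copies of $1_{S^{-1}R}$), contradicting properness. Thus every proper hyperideal, and in particular every maximal hyperideal $N$, satisfies $N \subseteq M \subsetneq S^{-1}R$; the definition of maximality applied to $N$ then forces $N = M$, so $M$ is the unique maximal hyperideal.

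The main obstacle I anticipate is not computational but the careful handling of the non-uniqueness of representatives flagged in the remark before Theorem~\ref{21}: I must justify the implication $\tfrac{r}{s} \notin M \Rightarrow r \notin P$ correctly, which works precisely because membership in $M$ only demands the existence of \emph{some} representative with numerator in $P$. Once this is secured, the remainder reduces to a routine application of Lemma~\ref{11}(3) and of the maximality criterion.
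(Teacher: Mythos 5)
Your proposal is correct and follows essentially the same route as the paper: identify $M$ with $S^{-1}P$, show every element of $S^{-1}R\setminus M$ has numerator in $S$ and is therefore invertible via $G(\tfrac{r}{s},\tfrac{s}{r},{\tfrac{1}{1}}^{(n-2)})=\tfrac{1}{1}$, and conclude by the preceding maximality lemma. The only (harmless) divergences are that you delegate the hyperideal verification to the remark opening Section~4 and the properness of $M$ to Theorem~\ref{21}, where the paper carries out both checks by direct computation.
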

 \begin{proof}
 Clearly, $S=R-P$ is an $n$-ary multiplicative subset of $R$. Let $\frac{a_1}{s_1},...,\frac{a_m}{s_m} \in M$ such that $a_1^m \in P$ and $s_1^m \in S$. Then
 
 $F(\frac{a_1}{s_1},...,\frac{a_m}{s_m})=\frac{f(g(a_1,s_2^m,1^{(n-m)}),g(s_1,a_2,s_3^m,1^{(n-m)}),...,g(s_1^{m-1},a_m,1^{(n-m)}))}{g(s_1^m,1^{(n-m)})}.$\\
 Since $a_1^m \in P$, then
 
  $g(a_1,s_2^m,1^{(n-m)}),g(s_1,a_2,s_3^m,1^{(n-m)}),...,g(s_1^{m-1},a_m,1^{(n-m)})  \in P$ \\and so 
 
 $f(g(a_1,s_2^m,1^{(n-m)}),g(s_1,a_2,s_3^m,1^{(n-m)}),...,g(s_1^{m-1},a_m,1^{(n-m)})) \subseteq P$.\\
 Thus we conclude that $F(\frac{a_1}{s_1},...,\frac{a_m}{s_m}) \subseteq M$.\\
 Clearly, if $\frac{a}{r} \in M$, then $-\frac{a}{r} =\frac{-a}{r} \in M$. Also, since $0 \in P$,then  $0_{R_P}=\frac{0}{s} \in M$ for all $s \in S$. Hence $(M,F)$ is a canonical $n$-ary hypergroup.\\
 Now, let $r_1^n\in R$, $s_1^n \in S$ and $k \in\{1,...,n\}$. Then 
 
 $G(\frac{r_1}{s_1},...,\frac{r_{k-1}}{s_{k-1}},M,\frac{r_{k+1}}{s_{k+1}},...,\frac{r_n}{s_n})=\bigcup\{G(\frac{r_1}{s_1},...,\frac{r_{k-1}}{s_{k-1}},\frac{a}{s},\frac{r_{k+1}}{s_{k+1}},...,\frac{r_n}{s_n})\ \vert \ \frac{a}{s} \in M\}$

 $\hspace{4.7cm}=\bigcup \{ \frac{g(r_1^{k-1},a,r_{k-1}^n)}{g(s_1^{k-1},s,s_{k-1}^n)}\ \vert \  a \in P, s \in S\}$.\\
 Since $a \in P$, then $g(r_1^{k-1},a,r_{k-1}^n) \in P$ and so $G(\frac{r_1}{s_1},...,\frac{r_{k-1}}{s_{k-1}},M,\frac{r_{k+1}}{s_{k+1}},...,\frac{r_n}{s_n}) \subseteq M$. Thus, $(M,F,G)$ is a hyperideal of $R_P$.\\
 Suppose that $1_{R_P}=\frac{1}{1} \in M$. Then there exist $a \in P$ and $s \in S$ such that $\frac{1}{1}=\frac{a}{s}$. It implies that there exists $t \in S$ such that 
 
 $0 \in g(t,f(g(a,1,1^{(n-2)}),-g(1,s,1^{(n-2)}),0^{(m-2)}),1^{(n-2)})$
 
 $\hspace{0.3cm}=f(g(t,a,1^{(n-2)}),-g(t,s,1^{(n-2)}),0^{(m-2)})$.\\
 Since $g(t,a,1^{(n-2)}) \in P$, then $g(t,s,1^{(n-2)}) \in P$. Since $P$ is an $n$-ary prime hyperideal of $R$, then we obtain $t \in P$ or $s \in P$ which is a contradiction. Then $M$ is a proper hyperideal of $R$.\\
 Now, suppose that $\gamma \in R_P-M$.  It means $\gamma=\frac{r}{s}$ such that $r \in R-P$ and $s \in S$. Then $r \in S$ and so $\frac{s}{r} \in M$. Hence $\frac{1}{1}=G(\frac{r}{s},\frac{s}{r},\frac{1}{1}^{(n-2)}) \in M$ which is a contradiction. Consequently, $M$ is the only maximal hyperideal of $R_P$.
 \end{proof}
 \begin{theorem}
 Let $R$ be a Krasner $(m,n)$-hyperring and $S$ be an $n$-ary multiplicative subset of $R$ with $1 \in S$. If $P$ is an $n$-ary prime hyperideal of $R$ with $P \cap S=\varnothing$, then $S^{-1}P$ is an $n$-ary prime hyperideal of $S^{-1}R$.
 \end{theorem}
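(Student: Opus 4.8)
The plan is to verify the two defining requirements for $S^{-1}P$ to be an $n$-ary prime hyperideal of $S^{-1}R$: that it is a \emph{proper} hyperideal, and that it satisfies the element-wise primeness criterion of Lemma 2.5. That $S^{-1}P$ is a hyperideal of $S^{-1}R$ has already been recorded at the opening of this section, so the first task reduces to properness. For this I would invoke Theorem \ref{21}: since $P \cap S = \varnothing$ by hypothesis, that theorem yields $S^{-1}P \neq S^{-1}R$, so $S^{-1}P$ is proper.

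For primeness it suffices, by Lemma 2.5 applied to $S^{-1}R$, to take arbitrary $\frac{r_1}{s_1},\ldots,\frac{r_n}{s_n} \in S^{-1}R$ with $G(\frac{r_1}{s_1},\ldots,\frac{r_n}{s_n}) \in S^{-1}P$ and to deduce $\frac{r_i}{s_i} \in S^{-1}P$ for some $i$. By the definition of $G$ this membership reads $\frac{g(r_1^n)}{g(s_1^n)} \in S^{-1}P$, so there exist $a \in P$ and $s,t \in S$ with
$$0 \in g\big(t, f(g(g(r_1^n), s, 1^{(n-2)}), -g(a, g(s_1^n), 1^{(n-2)}), 0^{(m-2)}), 1^{(n-2)}\big).$$
Distributing $g$ over $f$ exactly as in the well-definedness arguments above, I would rewrite the right-hand side as $f(g(t, g(r_1^n), s, 1^{(n-3)}), -g(t, a, g(s_1^n), 1^{(n-3)}), 0^{(m-2)})$, whence $g(t, g(r_1^n), s, 1^{(n-3)}) \in f(g(t, a, g(s_1^n), 1^{(n-3)}), 0^{(m-1)})$ by axiom (3) of the canonical $m$-ary hypergroup $(R,f)$.

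The crux, and the step I expect to demand the most care, is converting this containment into a genuine membership in $P$. Since $a \in P$ and $P$ is a hyperideal, $g(t, a, g(s_1^n), 1^{(n-3)}) \in P$; because $(P,f)$ is an $m$-ary subhypergroup containing $0$, the displayed containment then forces $g(t, g(r_1^n), s, 1^{(n-3)}) \in P$. Flattening this nested product by associativity of $g$ exhibits it as a $(2n-1)$-fold product (via $g_{(2)}$) of the factors $t, r_1,\ldots,r_n, s$ together with $n-3$ copies of $1$; applying the element-wise primeness of $P$ (Lemma 2.5) repeatedly to the outer and inner $g$'s, one of these factors must lie in $P$. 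Since $t, s, 1 \in S$ while $P \cap S = \varnothing$, none of $t, s, 1$ can belong to $P$, so necessarily $r_i \in P$ for some $i$. Then $\frac{r_i}{s_i} \in S^{-1}P$, which completes the verification and shows that $S^{-1}P$ is an $n$-ary prime hyperideal of $S^{-1}R$.
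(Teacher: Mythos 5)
Your proposal is correct and follows essentially the same route as the paper: unwind the membership $\frac{g(r_1^n)}{g(s_1^n)} \in S^{-1}P$ to a statement of the form $g(t, g(r_1^n), s, 1^{(n-3)}) \in P$ with $t,s \in S$, then apply the element-wise primeness of $P$ together with $P \cap S = \varnothing$ to force $g(r_1^n) \in P$ and hence some $r_i \in P$. You merely supply more detail than the paper does at the unwinding step and add an explicit properness check via the $I \cap S \neq \varnothing$ criterion, both of which are welcome but not a different argument.
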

 \begin{proof}
 Let $G(\frac{a_1}{s_1},...,\frac{a_n}{s_n}) \in  S^{-1}P$ for $\frac{a_1}{s_1},...,\frac{a_n}{s_n} \in S^{-1}R$.
 Then we have $\frac{g(a_1^n)}{g(s_1^n)} \in  S^{-1}P$. It implies that there exists $t \in S$ such that $g(t,g(a_1^n),1^{(n-2)}) \in P$. Since $P$ is an $n$-ary prime hyperideal of $R$ and $P \cap S =\varnothing$, then $g(a_1^n) \in P$  which means there exists $1 \leq i \leq n$ with $a_i \in P$. Hence we conclude that $\frac{a_i}{s_i} \in S^{-1}P$ for some $1 \leq i \leq n$. Thus $S^{-1}P$ is an $n$-ary prime hyperideal of $S^{-1}R$.
 \end{proof}
 \begin{example} 
The set $R=\{0,1,2\}$ with the following 3-ary hyperoeration $f$ and 3-ary operation $g$ is a Krasner $(3,3)$-hyperring such that $f$ and $g$ are commutative.
\[f(0,0,0)=0, \ \ \ f(0,0,1)=1, \ \ \ f(0,1,1)=1, \ \ \ f(1,1,1)=1, \ \ \ f(1,1,2)=R,\]
\[f(0,1,2)=R, \ \ \ f(0,0,2)=2,\ \ \ f(0,2,2)=2,\ \ \ f(1,2,2)=R, \ \ \ f(2,2,2)=2,\]
$\ \ \ g(1,1,1)=1,\ \ \ \ g(1,1,2)=g(1,2,2)=g(2,2,2)=2,$\\
and for $x_1,x_2 \in R, \ g(0,x_1,x_2)=0$.

$S=\{1,2\}$ is a 3-ary multiplicative subset of Krasner $(3, 3)$-hyperring $(R, f , g)$ and hyperideal $P=\{0\}$ is a 3-ary prime hyperideal of $R$ (see example 4.10 in \cite{amer}). Thus $S^{-1}P=\{\frac{0}{1}\}$ is a $3$-ary prime hyperideal of $S^{-1}R$.
\end{example}
 Let $I$ be a hyperideal in a Krasner $(m, n)$-hyperring $R$ with
scalar identity. The radical (or nilradical) of $I$, denoted by ${\sqrt I}^{(m,n)}$
is the hyperideal $\bigcap P$, where
the intersection is taken over all $n$-ary prime hyperideals $P$ which contain $I$. If the set of all $n$-ary hyperideals containing $I$ is empty, then ${\sqrt I}^{(m,n)}$ is defined to be $R$.
Ameri and Norouzi showed that if $x \in {\sqrt I}^{(m,n)}$, then 
there exists $t \in \mathbb {N}$ such that $g(x^ {(t)} , 1_R^{(n-t)} ) \in I$ for $t \leq n$, or $g_{(l)} (x^ {(t)} ) \in I$ for $t = l(n-1) + 1$ \cite{amer}.

 \begin{lem}  \label{22}
 Let $R$ be a Krasner $(m,n)$-hyperring and $S$ be an $n$-ary multiplicative subset of $R$ with $1 \in S$. If $I$ is an $n$-ary hyperideal of $R$, then $\sqrt{S^{-1}I}^{(m,n)}=S^{-1}\sqrt{I}^{(m,n)}.$
 \end{lem}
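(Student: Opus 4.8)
The plan is to establish the two inclusions $S^{-1}\sqrt{I}^{(m,n)} \subseteq \sqrt{S^{-1}I}^{(m,n)}$ and $\sqrt{S^{-1}I}^{(m,n)} \subseteq S^{-1}\sqrt{I}^{(m,n)}$ separately, working throughout with the element-wise description of the radical recalled just before the statement: an element $x$ lies in $\sqrt{I}^{(m,n)}$ precisely when some power of $x$ falls into the hyperideal, i.e. $g(x^{(t)}, 1^{(n-t)}) \in I$ for some $t \le n$, or $g_{(l)}(x^{(t)}) \in I$ for $t = l(n-1)+1$. I will carry out the argument in the case $t \le n$; the case $t = l(n-1)+1$ is identical after replacing $g$ by the iterated operation $g_{(l)}$ and iterating $G$ accordingly.

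For the first inclusion I would start with $\frac{a}{s} \in S^{-1}\sqrt{I}^{(m,n)}$, so $a \in \sqrt{I}^{(m,n)}$ and there is a $t$ with $g(a^{(t)}, 1^{(n-t)}) \in I$. Using the defining formula $G(\frac{r_1}{s_1},\ldots,\frac{r_n}{s_n}) = \frac{g(r_1^n)}{g(s_1^n)}$, one computes $G({\frac{a}{s}}^{(t)},{\frac{1}{1}}^{(n-t)}) = \frac{g(a^{(t)},1^{(n-t)})}{g(s^{(t)},1^{(n-t)})}$, whose numerator lies in $I$ and whose denominator lies in $S$; hence this power of $\frac{a}{s}$ belongs to $S^{-1}I$, and therefore $\frac{a}{s} \in \sqrt{S^{-1}I}^{(m,n)}$.

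For the reverse inclusion I would take $\frac{a}{s} \in \sqrt{S^{-1}I}^{(m,n)}$, so the same computation gives $\frac{g(a^{(t)},1^{(n-t)})}{g(s^{(t)},1^{(n-t)})} \in S^{-1}I$ for some $t$. By the description of $S^{-1}I$ this fraction equals $\frac{b}{s'}$ with $b \in I$ and $s' \in S$, so the defining relation $\sim$ yields $u \in S$ with
$$0 \in g\big(u, f(g(g(a^{(t)},1^{(n-t)}), s', 1^{(n-2)}), -g(b, g(s^{(t)},1^{(n-t)}), 1^{(n-2)}), 0^{(m-2)}), 1^{(n-2)}\big).$$
Distributing $g$ over $f$ (axiom 3), absorbing $0$, and using associativity turns this into $0 \in f(X, -Y, 0^{(m-2)})$, where $X = g(u, s', g(a^{(t)},1^{(n-t)}), 1^{(n-3)})$ and $Y = g(u, b, g(s^{(t)},1^{(n-t)}), 1^{(n-3)})$ are single (non-hyper) elements. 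Invoking the reversibility axiom of the canonical $m$-ary hypergroup together with the scalar-identity property $f(Y,0^{(m-1)}) = Y$ forces $X = Y$; since $b \in I$ and $I$ is a hyperideal, $Y \in I$, and hence $X \in I$.

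Finally I set $w = g(u, s', 1^{(n-2)}) \in S$ (as $S$ is multiplicative), so associativity rewrites $X$ as $g(w, g(a^{(t)},1^{(n-t)}), 1^{(n-2)}) \in I$, and I put $c = g(w, a, 1^{(n-2)})$. Applying $g$ to this membership with $t-1$ further copies of $w$ and reorganizing by commutativity and associativity shows $g(c^{(t)}, 1^{(n-t)}) = g(w^{(t)}, a^{(t)}, 1^{(\cdots)}) \in I$, so $c \in \sqrt{I}^{(m,n)}$; then the cancellation $\frac{c}{g(w,s,1^{(n-2)})} = \frac{g(w,a,1^{(n-2)})}{g(w,s,1^{(n-2)})} = \frac{a}{s}$ (verified directly through $\sim$, or via Lemma \ref{11}) exhibits $\frac{a}{s}$ as an element of $S^{-1}\sqrt{I}^{(m,n)}$. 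I expect the main obstacle to be exactly this reverse inclusion, namely the passage from the hypothesis $\frac{g(a^{(t)},1^{(n-t)})}{g(s^{(t)},1^{(n-t)})} \in S^{-1}I$ to the concrete membership $g(w, g(a^{(t)},1^{(n-t)}), 1^{(n-2)}) \in I$ with $w \in S$: here one must unwind the equivalence relation, distribute $g$ across $f$, and apply reversibility and the absorption property in the correct order while keeping the $1^{(\cdot)}$-padding bookkeeping consistent. That bookkeeping is heavier in the $g_{(l)}$ case, but no new idea is required there.
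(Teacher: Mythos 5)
Your proposal is correct and follows essentially the same route as the paper: both inclusions are handled via the power characterization of the radical, pulling the power back into $I$ by a multiplier from $S$, and then cancelling that multiplier in the fraction. The only difference is that you explicitly unwind the equivalence relation to justify the step ``$\frac{r}{s}\in S^{-1}I$ implies $g(w,r,1^{(n-2)})\in I$ for some $w\in S$,'' which the paper simply asserts, and you shortcut the other direction by choosing a representative with numerator already in $\sqrt{I}^{(m,n)}$.
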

 \begin{proof}
 Let $\frac{a}{s} \in \sqrt{S^{-1}I}^{(m,n)}$. Then there exists $k \in \mathbb{N}$ with $G(\frac{a}{s}^{(k)},\frac{1}{1}^{(n-k)}) \in S^{-1}I$ for $k \leq n$, or $G_{(l)}(\frac{a}{s}^{(k)}) \in S^{-1}I$ for $k=l(n-1)+1$. If $G(\frac{a}{s}^{(k)},\frac{1}{1}^{(n-k)}) \in S^{-1}I$, then $\frac{g(a^{(k)},1^{(n-k)})}{g(a^{(k)},1^{(n-k)})} \in S^{-1}I$. Therefore    
 $g(t,g(a^{(k)},1^{(n-k)}),1^{(n-2)}) \in I$ for some $t \in S$ and so $g(g(t,a,1^{(n-2)})^{(k)},1^{(n-k)})=g(t^{(k)},g(a^{(k)},1^{(n-k)}),1^{(n-k-1)}) \in I$. It means $g(t,a,1^{(n-2)}) \in \sqrt{I}^{(m,n)}$ and so $g(t^{(m-1)},a,1^{(n-m)}) \in \sqrt{I}^{(m,n)}$. Hence we get $\frac{g(t^{(m-1)},a,1^{(n-m)})}{g(t^{(m-1)},s,1^{(n-m)})}=\frac{a}{s} \in S^{-1}\sqrt{I}^{(m,n)}$, by Lemma \ref{11} (4). Similarly for the other case. Thus $\sqrt{S^{-1}I}^{(m,n)} \subseteq S^{-1}\sqrt{I}^{(m,n)}.$ \\Now, let
 $\frac{a}{s} \in S^{-1}\sqrt{I}^{(m,n)}$. Then we conclude $g(t,a,1^{(n-2)}) \in  \sqrt{I}^{(m,n)}$ for some $t \in S$ and so $g(t^{(m-1)},a,1^{(n-m)}) \in  \sqrt{I}^{(m,n)}$. It means that there exists $k \in \mathbb{N}$ with $g(g(t^{(m-1)},a,1^{(n-m)})^{(k)},1^{(n-k)}) \in I$ for $k \leq n$, or $g_{(l)}(g(t^{(m-1)},a,1^{(n-2)})^{(k)}) \in I$ for $k=l(n-1)+1$. If $g(g(t^{(m-1)},a,1^{(n-2)})^{(k)},1^{(n-k)}) \in I$, then we have 
 
 $G(\frac{a}{s}^{(k)},\frac{1}{1}^{(n-k)})=G(\frac{g(t^{(m-1)},a,1^{(n-2)})}{g(t^{(m-1)},s,1^{(n-2)})}^{(k)},\frac{1}{1}^{(n-k)})$

 $\hspace{2.4cm}=\frac{g(g(t^{(m-1)},a,1^{(n-2)})^{(k)},1^{(n-k)})}{g(g(t^{(m-1)},s,1^{(n-2)})^{(k)},1^{(n-k)})} \in S^{-1}I$.\\ Therefore we get $\frac{a}{s} \in \sqrt{S^{-1}I}^{(m,n)}$. Similarly for the other case. Thus $S^{-1}\sqrt{I}^{(m,n)} \subseteq \sqrt{S^{-1}I}^{(m,n)}$. Consequently, $\sqrt{S^{-1}I}^{(m,n)}=S^{-1}\sqrt{I}^{(m,n)}$.
 \end{proof}

 A hyperideal $Q \neq R$ in a Krasner $(m, n)$-hyperring $(R, f , g)$ with the
scalar identity $1_R$ is said to be $n$-ary primary if $g(x^n _1) \in Q$ and $x_i \notin Q$ implies that $g(x_1^{i-1}, 1_R, x_{ i+1}^n) \in {\sqrt Q}^{(m,n)}$ \cite{amer}.
 
 \begin{theorem}
 Let $R$ be a Krasner $(m,n)$-hyperring and $S$ be an $n$-ary multiplicative subset of $R$ with $1 \in S$. If $P$ is an $n$-ary  primary hyperideal of $R$ with $P \cap S=\varnothing$, then $S^{-1}P$ is an $n$-ary  primary hyperideal of $S^{-1}R$.
 \end{theorem}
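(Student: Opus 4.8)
The plan is to follow the template of the preceding theorem on $n$-ary prime hyperideals, upgrading it to the primary setting by tracking the radical through Lemma~\ref{22}. First I would record that $S^{-1}P$ is proper: since $P\cap S=\varnothing$, Theorem~\ref{21} gives $S^{-1}P\neq S^{-1}R$, so $S^{-1}P$ is an admissible candidate for an $n$-ary primary hyperideal (it is already a hyperideal of $S^{-1}R$ by the discussion preceding Theorem~\ref{21}).

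The technical heart is the membership criterion: $\frac{a}{s}\in S^{-1}P$ if and only if $g(u,a,1^{(n-2)})\in P$ for some $u\in S$. The forward direction is exactly the step already used in the prime case, while for the converse one checks, as in Lemma~\ref{11}(4), that $\frac{a}{s}=\frac{g(u,a,1^{(n-2)})}{g(u,s,1^{(n-2)})}$, the two cross products both collapsing to $g(u,a,s,1^{(n-3)})$, so that a numerator in $P$ places $\frac{a}{s}$ in $S^{-1}P$. The contrapositive is what I really need: if $\frac{a_i}{s_i}\notin S^{-1}P$, then $g(u,a_i,1^{(n-2)})\notin P$ for every $u\in S$.

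Now I would take $G(\frac{a_1}{s_1},\dots,\frac{a_n}{s_n})\in S^{-1}P$ with $\frac{a_i}{s_i}\notin S^{-1}P$. Writing this as $\frac{g(a_1^n)}{g(s_1^n)}\in S^{-1}P$ and applying the criterion yields $t\in S$ with $g(t,g(a_1^n),1^{(n-2)})\in P$. Using commutativity and associativity of $g$ I would regroup this single element, pulling $a_i$ together with $t$, as $g(\,g(t,a_i,1^{(n-2)}),a_1,\dots,\widehat{a_i},\dots,a_n)\in P$, a $g$-product of exactly $n$ factors. By the contrapositive above the factor $c:=g(t,a_i,1^{(n-2)})$ lies outside $P$, so the defining property of the $n$-ary primary hyperideal $P$ (applied to the non-member $c$) forces $g(a_1^{i-1},1_R,a_{i+1}^n)\in \sqrt{P}^{(m,n)}$.

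It remains to transfer this back to $S^{-1}R$. Since $g(s_1^{i-1},1,s_{i+1}^n)\in S$ and the numerator lies in $\sqrt{P}^{(m,n)}$, the element $G(\frac{a_1}{s_1},\dots,\frac{1}{1},\dots,\frac{a_n}{s_n})=\frac{g(a_1^{i-1},1_R,a_{i+1}^n)}{g(s_1^{i-1},1,s_{i+1}^n)}$ belongs to $S^{-1}\sqrt{P}^{(m,n)}$, which equals $\sqrt{S^{-1}P}^{(m,n)}$ by Lemma~\ref{22}; this is precisely the conclusion required for $S^{-1}P$ to be $n$-ary primary. I expect the main obstacle to be the bookkeeping in the regrouping step, namely rewriting $g(t,g(a_1^n),1^{(n-2)})$ as a clean $n$-fold product isolating $a_i$ together with $t$, and nailing down the converse half of the membership criterion, since both hinge on the scalar-identity and associativity manipulations that the paper performs somewhat informally.
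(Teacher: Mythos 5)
Your proposal is correct, and it reaches the conclusion by a cleaner route than the paper. Both arguments begin identically (reduce $G(\frac{a_1}{s_1},\dots,\frac{a_n}{s_n})\in S^{-1}P$ to $g(t,g(a_1^n),1^{(n-2)})\in P$ for some $t\in S$ via the membership criterion) and end identically (push the radical through the localization with Lemma~\ref{22}). The difference is in the middle. The paper applies the primary condition to the product with entries $t$, $g(a_1^n)$ and $n-2$ copies of $1$, which forces a four-way case split ($a_i\in P$, $g(a_1^{i-1},1,a_{i+1}^n)\in\sqrt{P}^{(m,n)}$, $t\in\sqrt{P}^{(m,n)}$, $g(a_1^n)\in\sqrt{P}^{(m,n)}$ -- the third of which is in fact vacuous since $P\cap S=\varnothing$ forces $\sqrt{P}^{(m,n)}\cap S=\varnothing$), and in the decisive case it must additionally invoke Theorem 4.28 of the Ameri--Norouzi paper to the effect that $\sqrt{P}^{(m,n)}$ is $n$-ary prime in order to extract a single $a_k\in\sqrt{P}^{(m,n)}$. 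You instead regroup the $(2n-1)$-fold product so that $t$ is absorbed into the distinguished slot, $g\bigl(g(t,a_i,1^{(n-2)}),a_1,\dots,\widehat{a_i},\dots,a_n\bigr)\in P$, note that $g(t,a_i,1^{(n-2)})\notin P$ by the contrapositive of the membership criterion applied to $\frac{a_i}{s_i}\notin S^{-1}P$, and then a single application of the primary condition yields $g(a_1^{i-1},1_R,a_{i+1}^n)\in\sqrt{P}^{(m,n)}$ directly. This buys you a case-free argument that does not depend on the primeness of the radical; what it costs is the bookkeeping you already flagged, namely a careful statement and proof of the two-sided membership criterion (the paper only ever uses its forward direction explicitly, the backward direction being implicit in Lemma~\ref{11}(4) and in the proof of Lemma~\ref{22}) and the commutativity/associativity regrouping, both of which are routine in this setting.
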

 \begin{proof}
 Let $\frac{a_1}{s_1},...,\frac{a_n}{s_n} \in S^{-1}R$ such that $G(\frac{a_1}{s_1},...,\frac{a_n}{s_n}) \in  S^{-1}P$ .
 Then we have $\frac{g(a_1^n)}{g(s_1^n)} \in  S^{-1}P$. It implies that there exists $t \in S$ such that $g(t,g(a_1^n),1^{(n-2)}) \in P$. Since $P$ is an $n$-ary  primary  hyperideal of $R$, then  there exist $1 \leq i  \leq n$ such that at least one of the  cases hold: $a_i \in P$, $g(a_1^{i-1},1,a_{i+1}^n) \in \sqrt{P}^{(m,n)}$, $t \in \sqrt{P}^{(m,n)}$ or $g(a_1^n) \in \sqrt{P}^{(m,n)}$. If $a_i \in P$, then $\frac{a_i}{s_i} \in S^{-1}P$ and we are done. 
 If $g(a_1^{i-1},1,a_{i+1}^n) \in \sqrt{P}^{(m,n)}$, then $G(\frac{a_1}{s_1},...,\frac{a_{i-1}}{s_{i-1}},\frac{1}{1},\frac{a_{i+1}}{s_{i+1}},...,\frac{a_n}{s_n})=\frac{g(a_1^{i-1},1,a_{i+1}^n)}{g(s_1^{i-1},1,s_{i+1}^n)} \in S^{-1}\sqrt{P}^{(m,n)}=\sqrt{S^{-1}P}^{(m,n)}$, by Lemma \ref{22}. If $t \in \sqrt{P}^{(m,n)}$, then $g(t^{(m-1)},a_k,1^{(m-n)}) \in \sqrt{P}^{(m,n)}$, for all $1 \leq k \leq n$. Therefore $\frac{g(t^{(m-1)},a_k,1^{(m-n)})}{g(t^{(m-1)},s_k,1^{(m-n)})} \in S^{-1}\sqrt{P}^{(m,n)}=\sqrt{S^{-1}P}^{(m,n)}$ and so $\frac{a_k}{s_k} \in \sqrt{S^{-1}P}^{(m,n)}$. Therefore for each $i \neq k$, $G(\frac{a_1}{s_1},...,\frac{a_{i-1}}{s_{i-1}},\frac{1}{1},\frac{a_{i+1}}{s_{i+1}},...,\frac{a_n}{s_n}) \in \sqrt{S^{-1}P}^{(m,n)}$. Let $g(a_1^n) \in \sqrt{P}^{(m,n)}$. Theorem 4.28. in \cite{amer} shows that   $\sqrt{P}^{(m,n)}$ is an $n$-ary prime hyperideal of $R$. Hence there exists $1 \leq k \leq n $ such that $a_i \in \sqrt{P}^{(m,n)}$. It implies that $\frac{a_k}{s_k} \in S^{-1}\sqrt{P}^{(m,n)}=\sqrt{S^{-1}P}^{(m,n)}$. Therefore for each $i \neq k$, $G(\frac{a_1}{s_1},...,\frac{a_{i-1}}{s_{i-1}},\frac{1}{1},...,\frac{a_{i+1}}{s_{i+1}},\frac{a_n}{s_n}) \in \sqrt{S^{-1}P}^{(m,n)}$. Thus $S^{-1}P$ is an $n$-ary  primary hyperideal of $S^{-1}R$.
 \end{proof}
 
 A  proper hyperideal $I$ of a Krasner $(m,n)$-hyperring $(R,f,g)$ with the scalar identity $1_R$ is said to be $n$-ary 2-absorbing if for $x_1^n \in R$, $g(x_1^n) \in I$ implies that $g(x_i,x_j,1_R^{(n-2)}) \in I$ for some $1 \leq i < j \leq n$ \cite{mah}.
 
 \begin{theorem}
 Let $R$ be a Krasner $(m,n)$-hyperring and $S$ be an $n$-ary multiplicative subset of $R$ with $1 \in S$. If $P$ is an $n$-ary 2-absorbing hyperideal of $R$ with $P \cap S=\varnothing$, then $S^{-1}P$ is an $n$-ary 2-absorbing hyperideal of $S^{-1}R$.
 \end{theorem}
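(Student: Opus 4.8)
The plan is to follow the template already used for the $n$-ary prime and $n$-ary primary cases: first record that $S^{-1}P$ is a proper hyperideal, then pull a product lying in $S^{-1}P$ back through the $(m,n)$-structure to $R$, apply the $2$-absorbing hypothesis there, and finally push the resulting pairwise product forward to $S^{-1}R$. That $S^{-1}P$ is a hyperideal is already noted at the start of Section~4, and it is proper because $P\cap S=\varnothing$ forces $S^{-1}P\neq S^{-1}R$ by Theorem~\ref{21}.

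So I would start from $n$ elements $\frac{a_1}{s_1},\dots,\frac{a_n}{s_n}\in S^{-1}R$ with $G(\frac{a_1}{s_1},\dots,\frac{a_n}{s_n})=\frac{g(a_1^n)}{g(s_1^n)}\in S^{-1}P$, which (exactly as in the prime and primary proofs) yields some $t\in S$ with $g(t,g(a_1^n),1^{(n-2)})\in P$. The key device is to absorb $t$ into the first slot: using associativity, commutativity and the scalar identity I would rewrite
\[
g(t,g(a_1^n),1^{(n-2)})=g\bigl(g(t,a_1,1^{(n-2)}),a_2,\dots,a_n\bigr)=g(b_1,a_2,\dots,a_n),\qquad b_1:=g(t,a_1,1^{(n-2)}),
\]
so that the element of $P$ is genuinely a single $n$-ary product of the $n$ elements $b_1,a_2,\dots,a_n$.

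Now I apply the $2$-absorbing property of $P$ to $g(b_1,a_2,\dots,a_n)\in P$, obtaining a pair whose product (padded with $1$'s) lies in $P$. If that pair is $(a_i,a_j)$ with $2\le i<j\le n$, then $g(a_i,a_j,1^{(n-2)})\in P$ gives $G(\frac{a_i}{s_i},\frac{a_j}{s_j},\frac{1}{1}^{(n-2)})=\frac{g(a_i,a_j,1^{(n-2)})}{g(s_i,s_j,1^{(n-2)})}\in S^{-1}P$ at once. If instead the pair involves $b_1$, say $(b_1,a_j)$, then unwinding $b_1$ and absorbing the spare $1$'s gives $g(b_1,a_j,1^{(n-2)})=g(t,g(a_1,a_j,1^{(n-2)}),1^{(n-2)})\in P$. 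Writing $c=g(a_1,a_j,1^{(n-2)})$, I would then use that $g(t,c,1^{(n-2)})\in P$ together with the hyperideal property to obtain $g(t^{(m-1)},c,1^{(n-m)})\in P$, and conclude by Lemma~\ref{11}(4) that $G(\frac{a_1}{s_1},\frac{a_j}{s_j},\frac{1}{1}^{(n-2)})=\frac{c}{g(s_1,s_j,1^{(n-2)})}=\frac{g(t^{(m-1)},c,1^{(n-m)})}{g(t^{(m-1)},g(s_1,s_j,1^{(n-2)}),1^{(n-m)})}\in S^{-1}P$. In every case some pairwise $G$-product lands in $S^{-1}P$, which is precisely the $2$-absorbing condition.

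The main obstacle is the existential character of the $2$-absorbing conclusion: unlike the prime case I cannot simply cancel $t$ and deduce $g(a_1^n)\in P$, and a careless application to the $n$-tuple $(t,g(a_1^n),1^{(n-2)})$ may return only the trivial pair and stall. The fix is the asymmetric rewriting that merges $t$ with $a_1$ \emph{before} invoking $2$-absorbing, so that $t$ never sits in its own slot; the residual occurrence of $t$ in the mixed case is then harmless, because Lemma~\ref{11}(4) lets a factor from $S$ appearing in the numerator be cleared without leaving $P$. The only other thing to watch is the bookkeeping of the $(m,n)$-associativity and scalar-identity reductions when passing between the $g(\cdots)$ and $g_{(2)}(\cdots)$ forms, but these are routine and mirror the manipulations already carried out in Lemma~\ref{22}.
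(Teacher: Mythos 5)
Your proposal is correct and follows the same overall template as the paper (pull the product back to $R$ via some $t\in S$ with $g(t,g(a_1^n),1^{(n-2)})\in P$, apply the $2$-absorbing hypothesis in $R$, push a pairwise product forward), but the key step is organized differently. The paper applies the $2$-absorbing property to the family $t,a_1,\dots,a_n$ treated as $n+1$ separate factors, asserting directly that either $g(t,a_i,1^{(n-2)})\in P$ for some $i$ or $g(a_i,a_j,1^{(n-2)})\in P$ for some $i<j$; in the first case it clears $t$ exactly as you do, concludes $\frac{a_i}{s_i}\in S^{-1}P$, and hence that every pairwise $G$-product involving index $i$ lies in $S^{-1}P$. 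That dichotomy tacitly relies on a generalized $2$-absorbing property for products of more than $n$ factors, which is not proved in this paper. Your variant--merging $t$ into the first slot as $b_1=g(t,a_1,1^{(n-2)})$ so that the element of $P$ is literally an $n$-ary product $g(b_1,a_2,\dots,a_n)$--invokes only the definition verbatim, at the price of an asymmetric case split; your handling of the mixed case $(b_1,a_j)$ via the hyperideal property and Lemma \ref{11}(4) is correct and in fact slightly cleaner than the paper's, since it lands directly on the pair $(\frac{a_1}{s_1},\frac{a_j}{s_j})$ rather than on a single fraction in $S^{-1}P$. Both arguments are valid; yours is the more self-contained of the two.
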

 \begin{proof}
 Let $G(\frac{a_1}{s_1},...,\frac{a_n}{s_n}) \in  S^{-1}P$, for $\frac{a_1}{s_1},...,\frac{a_n}{s_n} \in S^{-1}R$.
 Then we have $\frac{g(a_1^n)}{g(s_1^n)} \in  S^{-1}P$. It implies that there exists $t \in S$ such that $g(t,g(a_1^n),1^{(n-2)}) \in P$. Since $P$ is an $n$-ary 2-absorbing hyperideal of $R$, then  there exist $1 \leq i < j \leq n$ such that $g(t,a_i,1^{(n-2)}) \in P$ or  $g(a_i,a_j,1^{(n-2)}) \in P$. Hence we conclude that $\frac{a_i}{s_i} \in S^{-1}P$ for some $1 \leq i \leq n$. Thus $S^{-1}P$ is an $n$-ary prime hyperideal of $S^{-1}R$. If for some $1 \leq i \leq n$, $g(t,a_i,1^{(n-2)}) \in P$, then $g(t^{(m-1)},a_i,1^{(n-m)}) \in P$ and so $\frac{g(t^{(m-1)},a_i,1^{(n-m)})}{g(t^{(m-1)},s_i,1^{(n-m)})} \in S^{-1}P$. Hence $\frac{a_i}{s_i} \in S^{-1}P$, by Lemma \ref{11} (4). Therefore for every $1 \leq j \leq n$, $G(\frac{a_i}{s_i},\frac{a_j}{s_j},\frac{1}{1}^{(n-2)}) \in S^{-1}P$ and we are done. If $g(a_i,a_j,1^{(n-2)}) \in P$, for some $1 \leq i < j \leq n$, then $\frac{g(a_i,a_j,1^{(n-2)})}{g(s_i,s_j,1^{(n-2)})} \in S^{-1}P$ which means $G(\frac{a_i}{s_i},\frac{a_j}{s_j},\frac{1}{1}^{(n-1)}) \in S^{-1}P$. Consequently, $S^{-1}P$ is an $n$-ary 2-absorbing hyperideal of $S^{-1}R$. 
 \end{proof}
 \section{qutient Krasner $(m,n)$-hyperring of fractions}
 Let $R$ be a Krasner $(m,n)$-hyperring and $I$ be a hyperideal of $R$. Then we consider the
set $R/I$ as follows:
 
 $R/I=\{f(r,I,0^{(m-2)}) \ \vert \ r \in R\}$.  
 \begin{lem}
 Let $R$ be a Krasner $(m,n)$-hyperring and $S$ be an $n$-ary multiplicative subset of $R$ with $1 \in S$. Let $I$ be a hyperideal of $R$ such that $S \cap I=\varnothing$. Then $\bar{S}=\{f(s,I,0^{(m-2)}) \ \vert \ s \in S\}$ is an $n$-ary multiplicative subset of $R/I$.
 \end{lem}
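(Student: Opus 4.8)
The plan is to verify directly that $\bar S = \{ f(s, I, 0^{(m-2)}) \mid s \in S \}$ is closed under the induced $n$-ary operation on $R/I$, which is what "$n$-ary multiplicative subset" requires. First I would recall that the $n$-ary operation $\bar g$ on the quotient $R/I$ is the one inherited from $g$, namely
\[
\bar g\bigl(f(r_1,I,0^{(m-2)}),\dots,f(r_n,I,0^{(m-2)})\bigr) = f\bigl(g(r_1^n),I,0^{(m-2)}\bigr),
\]
so that the coset $f(r,I,0^{(m-2)})$ behaves like "$r$ modulo $I$." Granting this description, the computation reduces to tracking representatives.

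Next I would take $n$ arbitrary elements of $\bar S$, say $f(s_i,I,0^{(m-2)})$ with $s_i \in S$ for $1 \le i \le n$, and apply $\bar g$ to them. Using the displayed formula, their product is $\bar g\bigl(f(s_1,I,0^{(m-2)}),\dots,f(s_n,I,0^{(m-2)})\bigr) = f\bigl(g(s_1^n),I,0^{(m-2)}\bigr)$. Since $S$ is an $n$-ary multiplicative subset of $R$, we have $g(s_1^n) \in S$, and therefore $f\bigl(g(s_1^n),I,0^{(m-2)}\bigr)$ is of the form $f(s,I,0^{(m-2)})$ with $s = g(s_1^n) \in S$, which is exactly an element of $\bar S$. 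This shows $\bar g(\bar S, \dots, \bar S) \subseteq \bar S$, establishing closure. One should also note that $\bar S$ is non-empty (indeed $f(1,I,0^{(m-2)}) \in \bar S$ since $1 \in S$), so $\bar S$ is a genuine $n$-ary multiplicative subset.

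The main obstacle, and the only point requiring genuine care, is the hypothesis $S \cap I = \varnothing$: this is what guarantees that $\bar S$ does not collapse onto the zero coset $f(0,I,0^{(m-2)}) = I$ and, more importantly, that the assignment $s \mapsto f(s,I,0^{(m-2)})$ interacts correctly with the quotient so that $\bar S$ is well-defined as a subset whose elements are indexed by genuine elements of $S$. I would verify that if $f(s,I,0^{(m-2)}) = f(s',I,0^{(m-2)})$ for $s,s' \in S$, the representative classes are consistent with the coset structure, and that the condition $S \cap I = \varnothing$ prevents any such coset from equaling $I$ itself. Once the inherited operation $\bar g$ is confirmed to be well-defined on $R/I$ (which rests on $I$ being a hyperideal) and closure is checked as above, the proof is complete.
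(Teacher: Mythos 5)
Your proposal is correct and follows essentially the same route as the paper: apply the induced $n$-ary operation on $R/I$ to $n$ elements $f(s_i,I,0^{(m-2)})$ of $\bar S$, identify the result with $f(g(s_1^n),I,0^{(m-2)})$, and invoke the $n$-ary multiplicativity of $S$ to conclude $g(s_1^n)\in S$ and hence closure. The paper's proof is exactly this computation; your additional remarks on non-emptiness and on the role of $S\cap I=\varnothing$ are harmless elaborations that the paper omits.
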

 \begin{proof}
 Let $f(s_1,I,0^{(m-2)}),...,f(s_n,I,0^{(m-2)}) \in \bar{S}$, for $s_1^n \in S$. Then we have 
 
 $g(f(s_1,I,0^{(m-2)}),...,f(s_n,I,0^{(m-2)}))=
 f(g(s_1^n),I,0^{m-2)})$.\\
 Since $S$ is an $n$-ary multiplicative subset of $R$, then $g(s_1^n) \in S$. It implies that $g(f(s_1,I,0^{(m-2)}),...,f(s_n,I,0^{(m-2)})) \in \bar{S}$. 
 \end{proof}
 \begin{theorem} \label{1363}
 Let $R$ be a Krasner $(m,n)$-hyperring and $S$ be an $n$-ary multiplicative subset of $R$ with $1 \in S$. Let $I$ be a hyperideal of $R$ such that $S \cap I=\varnothing$. If $\bar{S}=\{f(s,I,0^{(m-2)}) \ \vert \ s \in S\}$, then  $ \bar{S}^{-1}(R/I) \cong S^{-1}R/S^{-1}I$.
 \end{theorem}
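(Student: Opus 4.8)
The plan is to exhibit $S^{-1}R/S^{-1}I$ as a hyperring of fractions of $R/I$ at $\bar S$ by checking the hypotheses of Corollary~\ref{1361}. I would set $R_1 = R/I$ (with multiplicative set $\bar S$) and $R_2 = S^{-1}R/S^{-1}I$, and build a homomorphism $k\colon R/I \to S^{-1}R/S^{-1}I$ satisfying conditions (i)--(iii) of that corollary; the corollary then delivers a unique isomorphism $\bar S^{-1}(R/I) \cong S^{-1}R/S^{-1}I$. Throughout I write $\bar r = f(r,I,0^{(m-2)})$ for the class of $r$ in $R/I$, so that $\bar 0 = I$.

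To construct $k$, I compose the natural map $\phi\colon R \to S^{-1}R$ of Theorem~\ref{12} with the canonical projection $\pi\colon S^{-1}R \to S^{-1}R/S^{-1}I$. Because $\phi(a) = \frac a1 \in S^{-1}I$ for every $a \in I$, the homomorphism $\pi\circ\phi$ annihilates $I$ and hence factors through $R/I$, yielding $k$ with $k(\bar r)$ equal to the class of $\frac r1$. Conditions (i) and (iii) follow quickly from the earlier results: for $\bar s \in \bar S$ the element $k(\bar s)$ is the image of the invertible $\phi(s)$ (Theorem~\ref{13}(1)), so it is invertible in $R_2$; and since every $\frac rs \in S^{-1}R$ equals $G(\phi(r),\phi(s)^{-1},{\tfrac{1}{1}}^{(n-2)})$ by Theorem~\ref{13}(3), projecting through $\pi$ shows each element of $R_2$ has the form $g_2(k(\bar r),k(\bar s)^{-1},1^{(n-2)})$.

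The crux, and the step I expect to be the main obstacle, is condition (ii). Suppose $k(\bar r) = 0$, i.e.\ $\frac r1 \in S^{-1}I$, so $\frac r1 = \frac au$ for some $a \in I$ and $u \in S$. Unwinding the relation $\sim$ gives $v \in S$ with
\[0 \in g\big(v,f(g(r,u,1^{(n-2)}),-g(a,1,1^{(n-2)}),0^{(m-2)}),1^{(n-2)}\big),\]
and distributing $v$ turns this into $0 \in f\big(g(v,r,u,1^{(n-3)}),-g(v,a,1^{(n-2)}),0^{(m-2)}\big)$. Applying reversibility in the canonical $m$-ary hypergroup $(R,f)$ (together with $f(x,0^{(m-1)}) = x$) forces $g(v,r,u,1^{(n-3)}) = g(v,a,1^{(n-2)})$, which lies in $I$ because $a \in I$ and $I$ is a hyperideal. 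Rewriting the left side as $g\big(g(v,u,1^{(n-2)}),r,1^{(n-2)}\big)$ and setting $w = g(v,u,1^{(n-2)}) \in S$, I obtain $g(w,r,1^{(n-2)}) \in I$. Then $\bar t = f(w,I,0^{(m-2)}) \in \bar S$ satisfies $g(\bar t,\bar r,\bar 1^{(n-2)}) = f(g(w,r,1^{(n-2)}),I,0^{(m-2)}) = \bar 0$ in $R/I$, which is exactly condition (ii). The delicate part here is managing the three superimposed structures -- the localization relation, the hyperideal $S^{-1}I$, and the induced operations on $R/I$ -- and in particular invoking the reversibility of $(R,f)$ to pass from the statement that a hyper-difference contains $0$ to the genuine membership $g(w,r,1^{(n-2)}) \in I$.

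Having verified (i)--(iii), Corollary~\ref{1361} produces the unique isomorphism $h\colon \bar S^{-1}(R/I) \to S^{-1}R/S^{-1}I$ with $h\circ\bar\phi = k$, where $\bar\phi$ is the natural map of $R/I$ into its localization. This completes the proof.
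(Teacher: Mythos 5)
Your proposal is correct and follows essentially the same route as the paper: both define the map $k(f(r,I,0^{(m-2)}))=F(\tfrac r1,S^{-1}I,0_{S^{-1}R}^{(m-2)})$ and verify conditions (i)--(iii) of Corollary~\ref{1361} to obtain the isomorphism. Your treatment of condition (ii) -- unwinding the relation $\tfrac r1=\tfrac au$ and using reversibility of $(R,f)$ to land on $g(w,r,1^{(n-2)})\in I$ -- supplies a detail the paper merely asserts, but it is the same argument.
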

 \begin{proof}
 Define  mapping  $k:R/I \longrightarrow S^{-1}R/S^{-1}I$ as following:

 $k(f(r,I,0^{(m-2)})=F(\frac{r}{1},S^{-1}I,0_{S^{-1}R}^{(m-2)})$.\\
 It is easy to see the mapping is a homomorphism. Let $f(s,I,0^{(m-2)}) \in \bar{S}$.  Then $k(f(s,I,0^{(m-2)}))=F(\frac{s}{1},S^{-1}I,0_{S^{-1}R}^{(m-2)}).$ Since  $F(\frac{1}{s},S^{-1}I,0_{S^{-1}R}^{(m-2)}) \in S^{-1}R/S^{-1}I$, then we obtain
 
  $G(F(\frac{s}{1},S^{-1}I,0_{S^{-1}R}^{(m-2)}), F(\frac{1}{s},S^{-1}I,0_{S^{-1}R}^{(m-2)}),F(\frac{1}{1},S^{-1}I,0_{S^{-1}R}^{(m-2)})^{(n-2)}))$

  $\hspace{1cm} =F(G(\frac{s}{1},\frac{1}{s},\frac{1}{1}^{(n-2)}),S^{-1}I,0_{S^{-1}R}^{(m-2)})$

  $\hspace{1cm} =F(\frac{1}{1},S^{-1}I,0_{S^{-1}R}^{(m-2)})).$\\
Assume that $k(f(r,I,0^{(m-2)})=S^{-1}I.$ Then we have $F(\frac{r}{1},S^{-1}I,0_{S^{-1}R}^{(m-2)})=S^{-1}I$. It means $\frac{r}{1} \in S^{-1}I$. Then there exists $t \in S$ such that $g(t,r,1^{(n-2)}) \in I$. Clearly, $f(t,I,0^{(m-2)}) \in \bar{S}$ and we have
  
  $g(f(t,I,0^{(m-2)},f(r,I,0^{(m-2)}),f(1,I,0^{(m-2)})^{(n-2)})$
  
  $\hspace{1cm}=f(g(t,r,1^{(n-2)}),I,0^{(m-2)})=I$.\\Now, suppose that $F(\frac{r}{s},S^{-1}I,0_{S^{-1}R}) \in S^{-1}R/S^{-1}I$. Thus we have

  $F(\frac{r}{s},S^{-1}I,0_{S^{-1}R})=G(F  (\frac{r}{1},S^{-1}I,0_{S^{-1}R}),F  (\frac{1}{s},S^{-1}I,0_{S^{-1}R}),F  (\frac{1}{1},S^{-1}I,0_{S^{-1}R})^{(n-2)})$

  $\hspace{1cm}=G(k(f(r,I,0^{(m-2)}),
  k(f(r,I,0^{(m-2)}),F  (\frac{1}{1},S^{-1}I,0_{S^{-1}R})^{(n-2)})$.\\
  Hence, there exists an isomorphism from $ \bar{S}^{-1}(R/I) $ to  $S^{-1}R/S^{-1}I$, by Corollary \ref{1361}. It means $ \bar{S}^{-1}(R/I) \cong S^{-1}R/S^{-1}I$.
 \end{proof}

 Let $P$ be an $n$-ary prime hyperideal of Krasner $(m,n)$-hyperring $R$. Put $S=R-P$. Then $S$ is an $n$-ary multiplicative subset of $R$ such that $1 \in S$ and $0 \notin S$.  In this case, we denote $S^{-1}R=R_P$. Moreover, If $S^{-1}I$ is a hyperideal of $R_P$, then it is denoted by $IR_P$.  

 \begin{example}
 Let $R$ be a Krasner $(m,n)$-hyperring such that $P$ is  an $n$-ary prime hyperideal of $R$. Put $S=R-P$. Then
 
 $\bar{S}=\{f(s,P,0^{(n-2)}) \ \vert s \in S\}= R/P-\{f(P,0^{(n-1)})\}$\\
 is an $n$-ary multiplicative subset of $R/P$. By Theorem 4.6 in \cite{amer}, $R/P$ is an n-ary hyperintegral domain. Theorem \ref{1362} and \ref{15} show that $\bar{S}^{-1}(R/P)$ is an n-ary hyperintegral domain and  each nonzero element of  $\bar{S}^{-1}(R/P)$  is invertible. Moreover, we have

 $\bar{S}^{-1}(R/P) \cong \frac{S^{-1}R}{S^{-1}P}=\frac{R_p}{PR_p}$,\\
 by Theorem \ref{1363}.
 \end{example}
 \begin{example}
 Let $R$ be a Krasner $(m,n)$-hyperring such that $P$ and $Q$ are two $n$-ary prime hyperideals of $R$ such that $Q \subseteq P$. Put $S=R-P$. Then
 
 $\bar{S}=\{f(s,Q,0^{(n-2)}) \ \vert s \in S\}= R/Q-R/P.$\\ It is clear that $P/Q$ is an $n$-ary prime hyperideal of $R/Q$. Therefore $\bar{S}^{-1}(R/Q)=(R/Q)_{P/Q}$. By Theorem \ref{1363}, we get $(R/Q)_{P/Q} \cong \frac{R_P}{QR_P}$.
 \end{example}

\end{document}